\newtheorem{thm}{Theorem}[section]
\newcommand{\mc}{\multicolumn{1}{c}}
\numberwithin{equation}{section}
\numberwithin{thm}{section}
\newtheorem{dfn}[thm]{Definition}
\newtheorem{prop}[thm]{Proposition}
\newtheorem{cor}[thm]{Corollary}
\newtheorem{lem}[thm]{Lemma}
\newtheorem{algorithm}[thm]{Algorithm}
\begin{document}

\title{The Gauss quadrature for general linear functionals, Lanczos algorithm, and minimal partial realization
}

\titlerunning{Gauss quadrature for functionals, Lanczos algorithm, and partial realization}        

\author{Stefano Pozza         \and
        Miroslav Prani\'c 
}


\institute{S. Pozza \at
              Faculty of Mathematics and Physics, Charles University, Sokolovsk\'a 83, 186 75 Praha 8, Czech Republic. Associated member of ISTI-CNR, Pisa, Italy, and member of INdAM-GNCS group, Italy. \\
              \email{pozza@karlin.mff.cuni.cz}           
           \and
           M. Prani\'c \at
              Department of Mathematics and Informatics, University of Banja Luka, Faculty of Science,
 			     M. Stojanovi\'ca 2, 51000 \mbox{Banja Luka}, Bosnia and Herzegovina.
}

\date{Received: date / Accepted: date}

\maketitle

\begin{abstract}
 The concept of Gauss quadrature can be generalized to approximate linear functionals with complex moments.
  Following the existing literature, this survey will revisit such generalization. 
  It is well known that the (classical) Gauss quadrature for positive definite linear functionals
  is connected with orthogonal polynomials, and with the (Hermitian) Lanczos algorithm.
  Analogously, the Gauss quadrature for linear functionals is connected with formal orthogonal polynomials, 
  and with the non-Hermitian Lanczos algorithm with look-ahead strategy;
  moreover, it is related to the minimal partial realization problem.
  We will review these connections pointing out the relationships between several results established independently in related contexts.
  Original proofs of the Mismatch Theorem and of the Matching Moment Property are given 
  by using the properties of formal orthogonal polynomials and the Gauss quadrature for linear functionals.
\keywords{Linear functionals \and Matching moments \and Gauss quadrature \and Formal orthogonal polynomials \and Minimal realization \and Look-ahead Lanczos algorithm \and Mismatch Theorem.}
\end{abstract}

\section{Introduction}\label{sec:intro}
Let $A$ be an $N \times N$ Hermitian positive definite matrix and $\mathbf{v}$ a vector so that $\mathbf{v}^*\mathbf{v}=1$,
where $\mathbf{v}^*$ is the conjugate transpose of $\mathbf{v}$.
Consider the specific linear functional $\mathcal{L}$ on the space of polynomials 
defined by 
\begin{equation}\label{eq:lin:hpd}
  \mathcal{L}(\lambda^j) := \mathbf{v}^* A^j \mathbf{v} = m_j, \quad j=0,1,\dots , 
\end{equation}
where $m_0, m_1, \dots$ are real numbers known as the \emph{moments} of $\mathcal{L}$.
The functional $\mathcal{L}$ can be expressed as the Riemann-Stieltjes integral 
with a non-decreasing positive distribution function $\mu(\lambda)$ 
supported on the real axis having finitely many points of increase; see, e.g, \cite[Section 3.5]{LieStrBook13},\cite[Section 7.1]{GolMeuBook10}, and \cite[Chapter~II, Section~3]{ChiBook78}.
For $1 \leq n \leq N$, the $n$-node (classical) Gauss quadrature approximating $\mathcal{L}$ is given by the unique $n$-node quadrature formula
which matches the first $2n$ moments, i.e., 
\begin{equation*}
  \mathcal{L}(\lambda^j) = \int_\mathbb{R} \lambda^j \, \textrm{d}\mu(\lambda) = \sum_{i=1}^{n}\omega_{i}\, (\lambda_i)^j, \quad j=0,\dots,2n-1,
\end{equation*}
with $\omega_i$ positive weights and $\lambda_i$ positive distinct nodes.
Classical results of the Gauss quadrature can be found, e.g., 
in \cite[Chapters III and XV]{SzeBook39}, \cite[Chapter I, Section 6]{ChiBook78}, \cite{Gau81}, \cite[Section 1.4]{GauBook04},
\cite[Chapter 3.2]{gautschi2011numerical}, \cite[Section 3.2]{LieStrBook13}.
The linear functional $\mathcal{L}$ can be associated with a \emph{Jacobi matrix} $J_n$
which is an $n \times n$ real symmetric tridiagonal matrix. 
For every function $f$ defined on the spectrum of $A$ and $J_n$,
the matrix $J_n$ gives an algebraic expression for the Gauss quadrature, i.e.,
\begin{equation}\label{eq:hpd:GQ}
   \mathbf{v}^* f(A) \, \mathbf{v} =  \int_\mathbb{R} f(\lambda) \, \textrm{d}\mu(\lambda) \approx \sum_{i=1}^{n}\omega_{i}\, f(\lambda_i) =  \mathbf{e}_1^T f(J_n) \, \mathbf{e}_1,
\end{equation}
where $f(A)$ and $f(J_n)$ are matrix functions, and $\mathbf{e}_1$ is the first vector of the Euclidean basis (with $\mathbf{e}_1^T$ the transpose). 
The matrix $J_n$ can be obtained by $n$ iterations of the Hermitian Lanczos algorithm with inputs $A$ and $\mathbf{v}$.
Indeed, $J_n = V_n^* A V_n$, where $V_n$ is the matrix given by the Lanczos algorithm
whose columns are an orthonormal basis of the Krylov subspace $\{\mathbf{v}, A \mathbf{v}, \dots, A^{n-1} \mathbf{v}\}$.
Hence the Hermitian Lanczos algorithm with input $A, \mathbf{v}$ 
gives a matrix formulation of the Gauss quadrature for $\mathcal{L}$.
Figure \ref{fig:schemeH} (see \cite[Figure 3.2]{LieStrBook13}) represents the connections described above. 
Such connections can be derived by the properties of \emph{orthogonal polynomials};
a detailed explanation can be found, e.g., in \cite[Chapter 3]{LieStrBook13} and \cite{GolMeuBook10}
(note that the relationships between the Conjugate Gradient method, Lanczos algorithm, and orthogonal polynomials 
were already pointed out by Hestenes and Stiefel in their seminal paper published in 1952 \cite[Sections 14--17]{HesSti52}).

  \begin{figure}[tb]
\centering
\includegraphics[width = 0.75\textwidth]{./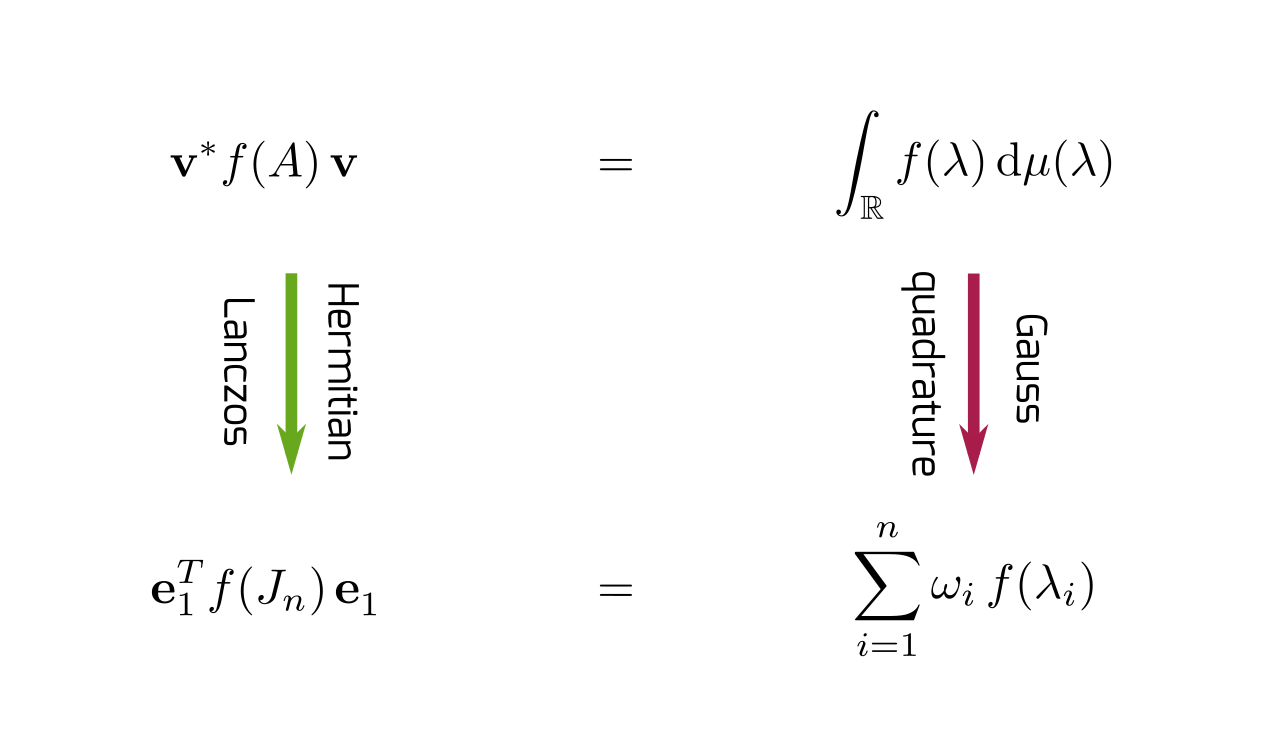}
\caption{Visualization of the connections between the (classical) Gauss quadrature and the Hermitian Lanczos algorithm.}
\label{fig:schemeH}
\end{figure}

This survey deals with the extension of the connections summarized in Figure \ref{fig:schemeH}
to the case of a general linear functional defined on the space $\mathcal{P}$ of the polynomials with generally complex coefficients,
$\mathcal{L}:\mathcal{P} \rightarrow \mathbb{C}$. 
We point out that, if not specified otherwise, 
we will consider linear functionals without the underlying assumption that they are determined by a matrix bilinear form analogous to \eqref{eq:lin:hpd}.
The survey will revisit the Gauss quadrature for linear functionals, 
its matrix formulation, its connection with the non-Hermitian Lanczos algorithm with look-ahead strategy,
and its relationship with the minimal partial realization problem. 
Furthermore, the connections between the incurable breakdown, the exactness of the Gauss quadrature, and the minimal realization problem,
will be examined with giving an original proof of the Mismatch Theorem (first proved in \cite[Theorem 4.2]{Tay82}).
The proof easily follows from the properties we will present,
providing a different interpretation of the Theorem in terms of formal orthogonal polynomials roots and nodes of the Gauss quadrature for linear functionals.

Information about the topics mentioned above and their mutual relationships are scattered in the literature. 
The survey aims to describe such topics and their connections
from the point of view of \emph{formal orthogonal polynomials}.
We hope that such a presentation will be of interest for readers working in related different areas.

Regarding the formal orthogonal polynomials and the Gauss quadrature generalization, we will mainly follow the book \cite{DraBook83} by Draux
where the Gauss quadrature is extended for the approximation of real-valued linear functionals. 
More precisely, a straightforward extension of Draux's definition to the case of complex-valued linear functionals will be presented.
The more recent Gauss quadrature definitions in \cite{Mil03} and in \cite{PozPraStr16,PozPraStr18}, obtained independently of \cite{DraBook83}, 
can be seen as a generalization to the complex quasi-definite case.
Indeed, for a real quasi-definite linear functional the quadratures in \cite{DraBook83,Mil03,PozPraStr16,PozPraStr18} are equivalent.
However, some results in \cite{PozPraStr16,PozPraStr18} do not have a counterpart in the real setting of \cite{DraBook83}
(for instance, formal orthonormal polynomials may have complex coefficients).
The case of a quasi-definite linear functional is simpler to treat; see, e.g., \cite{ChiBook78,PozPraStr16,PozPraStr18}.  
The survey will first recall the primary results associated with quasi-definite functionals and then deal with the case of a general linear functional.

This survey approaches the Lanczos algorithm in a finite dimensional setting.
Hence we will not treat infinite dimensional problems. 
For infinite dimensional problems related to positive definite linear functionals
refer, e.g., to \cite[Chapter II, Section 3, in particular Theorem 3.1]{ChiBook78}.
For the relationship with infinite dimensional Krylov subspace methods refer, e.g., to \cite{VorBook65}, \cite{GunHerSac14},
and \cite[Chapter 5]{malek2015} where many references to original works can be found.

Throughout the survey, we will consider only computations in exact arithmetic.
Since rounding errors substantially affect computations with short recurrences,
the results described in this survey cannot be applied to finite precision computations without a thorough analysis. 
Such analysis is out of the scope of this survey.
The interested reader can refer to \cite{Bai94} and \cite{Day93,Day97} for analysis of the non-Hermitian Lanczos algorithm in finite precision (assuming no breakdown);
see also the related works \cite{BaiDayYe99,TonYe00,PaiPanZem14}.
As pointed out in \cite[Sections 2.5.6 and 5.11]{LieStrBook13}, 
in finite precision arithmetic the short recurrences cannot preserve the biorthogonality or even the linear independence of the computed Krylov subspace basis.
Therefore look-ahead techniques for the non-Hermitian Lanczos have a limited impact in computing sufficiently well-conditioned basis when dealing with the loss of biorthogonality.
The interplay of look-ahead techniques and rounding errors in practical computations is still an open issue.

The paper is organized as follows.
Section \ref{sec:qdef} summarizes basic results of quasi-definite linear functionals. 
Section \ref{sec:FOP} recalls properties of formal orthogonal polynomials and of quasi-orthogonal polynomials 
with respect to a linear functional $\mathcal{L}:\mathcal{P}\rightarrow \mathbb{C}$.
The concept of Gauss quadrature for linear functionals and its matrix interpretation can be found respectively in Section \ref{sec:GQ}
and Section \ref{sec:mmp}.
The Gauss quadrature connections with the minimal partial realization problem 
and with the look-ahead Lanczos algorithm are described respectively in Section \ref{sec:minreal} and Section \ref{sec:lanczos}.
Section \ref{sec:conc} concludes the survey summarizing the links between the Gauss quadrature, minimal partial realization, and look-ahead Lanczos algorithm.

\section{Quasi-definite linear functionals}\label{sec:qdef}
We start recalling several results for quasi-definite linear functionals following the description in our previous works with Zden\v ek Strako\v s \cite{PozPraStr16,PozPraStr18}. These results will be extended to the more challenging general case in the remaining sections.

Let $\mathcal{L}: \mathcal{P} \rightarrow \mathbb{C}$ be a linear functional 
with complex moments,
\begin{equation}\label{def moments}
\mathcal{L}(\lambda^{k})=m_{k},\quad k=0,1,\ldots\,.
\end{equation}
An $n$-degree polynomial $p_n(\lambda) \in \mathcal{P}$ is called \emph{formal orthogonal polynomial (FOP)}
when it satisfies the \emph{orthogonality conditions} with respect to $\mathcal{L}$
\begin{equation*}
    \mathcal{L}(p_n\,\lambda^j)=0,  \quad \textrm{ for } j=0,\dots,n-1;
\end{equation*} 
 refer, e.g., to \cite[Introduction and Section 1.1]{DraBook83} and \cite[Chapter 2]{Bre02}.
 Notice that in \cite{BreBook80} $p_n(\lambda)$ is referred as \emph{general orthogonal polynomial};
 cf. the concept of \emph{weak orthogonal polynomial} in \cite[definition on p.~137]{Krall1966} and \cite[Section 2]{KwoLit97}.
 The subindex $n$ in the polynomial notation $p_n(\lambda)$ will always stand for the degree of the polynomial
 and we will not emphasize it further on.
 Moreover, whenever appropriate the argument $\lambda$ will be skipped for simplicity of notation.

Denoting with $\mathcal{P}_k \subset \mathcal{P}$ the subspace of polynomials of degree at most $k$,
the following classes of linear functionals can be defined (see, e.g., Theorem 3.1, Definition 3.2, Theorem 3.4 and the subsequent Corollary in \cite[Chapter I]{ChiBook78}, Theorem 1 and the subsequent Remark in \cite[Chapter VII]{LorWaaBook92}).
\begin{dfn}\label{def:quasidef}
The linear functional $\mathcal{L}$ is said to be \emph{quasi-definite} on $\mathcal{P}_{k}$
if there exist unique FOPs $p_0(\lambda),\dots,p_k(\lambda)$
(we always use the term ``unique'' for a polynomial in the sense of unique up to multiplication by a nonzero scalar)
satisfying the conditions
\begin{equation*}
 \mathcal{L}(p_j p_n) = 0  , \textrm{ for } j \neq n, 
   \textrm{ and } \, \mathcal{L}(p_n^2)\neq 0.
\end{equation*}
A linear functional is said to be \emph{positive definite} on $\mathcal{P}_k$ if in addition $p_0(\lambda),\dots,p_k(\lambda)$ are real polynomials, and $\mathcal{L}(p_n^2) > 0$, for $n=0,\dots,k$.
\end{dfn}
Note that the definition above is equivalent to the ones in \cite[Definition 2.1 and Definition 3.1]{PozPraStr16} and \cite[Definition 1.1]{PozPraStr18}.

If a FOP $p_n$ is such that $\mathcal{L}(p_n^2) = 1$, then it is a formal \emph{orthonormal} polynomial.
A beautiful summary about FOPs in the quasi-definite case can be found in the book by Chihara \cite{ChiBook78}
(notice that Chihara used the simplified term orthogonal polynomials instead of formal orthogonal polynomials).
A sequence of formal orthonormal polynomials $p_0(\lambda), \dots, p_k(\lambda)$ satisfy the three-term recurrence
\begin{equation}\label{eq:3term:ast1}
\beta_{n}  p_{n}(\lambda) = (\lambda - \alpha_{n-1}) p_{n-1} (\lambda) - \beta_{n-1} p_{n-2}(\lambda), \quad \quad n=1, \dots, k,
\end{equation} 
where $\beta_0=0$, $p_{-1}(\lambda) = 0$, $p_0(\lambda) = {1}/{\sqrt{m_0}}$
and the coefficients $\alpha_{n-1}$, $\beta_n$ are given by
\begin{equation*}
    \alpha_{n-1} = \mathcal{L}(\lambda p_{n-1} p_{n-1}),\,\,\,
    \beta_{n} =  \mathcal{L}(\lambda p_{n-1} p_{n}).
\end{equation*}
see, e.g., \cite[Chapter I, Section 4]{ChiBook78}, \cite[Theorem 2.4]{BreBook80}. 
Notice that in order to avoid ambiguity, we always take the principal value of the complex square root, i.e., we consider $arg(\sqrt{c}) \in (-\pi/2,\pi/2]$.
The recurrences \eqref{eq:3term:ast1} can be written in the compact form as 
\begin{equation*}
  \lambda \, \mathbf{p}(\lambda) = J_n \mathbf{p}(\lambda) + \beta_n p_n(\lambda)  \mathbf{e}_n, \quad n = 1, \dots, k,
\end{equation*}
where $\mathbf{p}(\lambda) = [p_0(\lambda), p_1(\lambda), \dots, p_{n-1}(\lambda)]^T$,
$\mathbf{e}_n$ is the $n$th vector of the Euclidean basis,
and $J_n$ is the $n$th complex Jacobi matrix
 \begin{equation}\label{eq:jacobi}
    J_{n} = \left [ \begin{matrix}
                    \alpha_0 & \beta_1   &              &             \\
                    \beta_1 & \alpha_1  &  \ddots      &             \\
                    \vspace{4pt}  &  \ddots   &  \ddots      & \beta_{n-1}  \\
                             &          & \beta_{n-1} & \alpha_{n-1}
                 \end{matrix}
         \right ], \quad n = 1, \dots, k;
 \end{equation}
 more information about complex Jacobi matrices and their properties can be found, e.g., in \cite{Bec01} and in \cite[in particular Section 4]{PozPraStr16}.

Given a smooth enough function $f(\lambda)$, the Gauss quadrature for quasi-definite linear functionals considered in \cite{PozPraStr16,PozPraStr18} has the form
\begin{equation*}
\mathcal{G}_n(f) :=
\sum_{i=1}^{\ell}\sum_{j=0}^{s_i-1}\omega_{i,j}\,f^{(j)}(\lambda_i),\quad
n=s_1+\,\cdots\,+s_{\ell},
\end{equation*}
and satisfies the following properties.
\begin{itemize}
  \item G1: the quadrature $\mathcal{G}_n(f)$ has maximal degree of exactness $2n-1$, 
	    i.e., it is exact for all polynomials of degree at most $2n-1$;
  \item G2: the quadrature $\mathcal{G}_n(f)$ is well-defined and it is unique.
  Moreover, Gauss quadratures with a smaller number of weights also exist
  and they are unique;
  \item G3: the quadrature $\mathcal{G}_n(f)$ can be written as the matrix form
$m_0\,\mathbf{e}_1^Tf(J_n) \mathbf{e}_1$, where $J_n$ is the
complex Jacobi matrix associated with $\mathcal{L}$. 
\end{itemize} 
A quadrature having properties G1, G2 and G3 exists 
if and only if the linear functional $\mathcal{L}$ is quasi-definite on $\mathcal{P}_n$;
see \cite[Section 7, in particular Corollaries 7.4 and 7.5]{PozPraStr16} and \cite[Theorem 3.1]{PozPraStr18}.
\\

Property G3 corresponds to the so called \emph{Matching Moment Property} of the complex Jacobi matrix, i.e.,
if the complex numbers $m_0, \dots, m_{2n-1}$ define a quasi-definite linear functional \eqref{def moments} with associated Jacobi matrix $J_n$
(here and in the following the simplified term \emph{quasi-definite linear functional} and \emph{positive definite linear functional}
 will stand for linear functionals that are quasi-definite and positive definite on the space of polynomials of sufficiently large degree), then
\begin{equation}\label{eq:mmp:qdef}
 m_0\,\mathbf{e}_1^T \, (J_n)^j \mathbf{e}_1 = m_j, \quad j=0,\dots, 2n-1; 
\end{equation}
see \cite[Section 5]{PozPraStr16}.
In \cite[Theorem 2]{FreHoc93} the Matching Moment Property was proved for a quasi-definite linear functional given by
\begin{equation*}
\mathcal{L}(f) = \mathbf{w}^* f(A) \mathbf{v},
\end{equation*}
where $A$ is a complex matrix and $\mathbf{w}, \mathbf{v}$ are vectors (compare also with \cite[Theorem 1]{Cyb87}).
In \cite{Str09} it was derived by the Vorobyev method of moments  (see in particular Chapter III of \cite{VorBook65}).

\section{Polynomials and orthogonality}\label{sec:FOP}
Let $\mathcal{L}: \mathcal{P} \rightarrow \mathbb{C}$ be a linear functional  
with moments $m_0, m_1, \dots$.
Consider 
the sequence of $k$-dimensional Hankel matrices
\begin{equation}\label{eq:hankel:subm}
 H_{k-1}=H(1:k;1:k)=\left[\,\, \begin{matrix}
\vspace{3pt} m_0 & m_1&\ldots & m_{k-1}\\
m_1 & m_2&\ldots & m_{k}\\
\vdots& \vdots&  &\vdots \\
m_{k-1}&m_{k} &\ldots& m_{2k-2}
\end{matrix} \,\,  \right], \quad k=1,2,\ldots\,,
\end{equation}
with the corresponding determinant $\Delta_{k-1}$
(the notation $A(i:j;\ell:n)$
stands for the submatrix of $A$ composed of the elements in the rows from $i$ to $j$
and in the columns from $\ell$ to $n$).
Setting ${\bf m}_{k-1}$ as the vector
$${\bf m}_{k-1}=H(1:k;k+1)=[m_{k},\ldots,m_{2k-1}]^T,$$
we are interested in the properties of the linear system
\begin{equation}\label{moments system n}
H_{k-1}\,{\bf c}=-{\bf m}_{k-1}.
\end{equation}
The solution of Hankel systems, and many related properties of Hankel matrices,
have been extensively treated in the literature;
see, e.g, to the seminal paper by Stieltjes \cite[Sections 8--11, pp.~624--630]{Sti1894}
(please notice that we refer to the English translation published by Springer in 1993), 
the monographs \cite[Chapter I]{ChiBook78}, \cite[Chapter 1]{DraBook83}, \cite{Ioh82}, \cite[Part I]{HeiRos84}, and \cite[Chapter 2]{BulVBaBook97}, and the paper \cite[Section 2]{GraLin83}.
Here, we refer in particular to some results in Section 1.2 of \cite{DraBook83};
their straightforward generalization to the complex case is equivalent to Theorems \ref{theorem key result} and \ref{theorem existence phi} given in this section;
see also Theorem 7 in \cite[Chapter XV, \S 10]{GanBook59} in the context of infinite Hankel matrices with finite rank.
We do not report the proofs of Theorems \ref{theorem key result} and \ref{theorem existence phi} since they are based on the study of Hankel matrices,
and they would lead us too far from the main point of the survey.
We will use them as the starting point of our presentation.
\begin{thm}\label{theorem key result}
Assume that $\Delta_{k-1}\neq 0$, then $\Delta_{k}=0$ if and only if
\begin{equation}\label{moments linear combination 0}
-m_{2k}=c_0m_{k}+c_1m_{k+1}+\ldots+c_{k-1}m_{2k-1},
\end{equation}
where ${\bf c}=[c_0,\ldots,c_{k-1}]^T$ is the unique solution of the
linear system \eqref{moments system n}.
Moreover, if $\Delta_{k-1}\neq 0$ and  $\Delta_k=\Delta_{k+1}=\ldots=\Delta_{k + j -1}=0$  for $j\geq1$,
then $\Delta_{k+j}=0$ if and only if
\begin{equation}\label{moments linear combination}
-m_{2k+j}=c_0m_{k+j}+c_1m_{k+j+1}+\ldots+c_{k-1}m_{2k+j-1}.
\end{equation}
\end{thm}
\noindent As a consequence, we get the following theorem; see \cite[Property 1.6]{DraBook83}.
\begin{thm}\label{theorem existence phi}
Assume that $\Delta_{k-1}\neq 0$ and
$\Delta_k=\Delta_{k+1}=\ldots=\Delta_{k+j-1}=0$. Then the system
\begin{equation}\label{moments system n+k-1}
H_{k+j-1}\,{\bf b}=-{\bf m}_{k+j-1}\end{equation} has (infinitely many) solutions 
if and only if $\Delta_{k+j}=\Delta_{k+j+1}=\ldots\Delta_{k+2j-1}=0$.
\end{thm}
\noindent The following theorem gives necessary and sufficient conditions for the existence (and uniqueness) of a FOP $p_n(\lambda)$ of degree $n$; see \cite[Property 1.14]{DraBook83}. 
\begin{thm}\label{thm:exist:wop}
Let $\mathcal{L}: \mathcal{P} \rightarrow \mathbb{C}$ be a linear functional.
An $n$-degree monic FOP exists if and only if one of the following conditions is satisfied.
\begin{itemize}
 \item $\Delta_{n-1} \neq 0$ (unique monic FOP);
 \item $\Delta_{k-1}\neq 0$ and
$\Delta_k=\Delta_{k+1}=\ldots=\Delta_{n-1}=\ldots = \Delta_{2n-k-1}=0$
(infinitely many monic FOPs);
\end{itemize}
where $\Delta_0,\Delta_1,\dots$ are the determinants of the Hankel submatrices $H_0, H_1, \dots $ composed of the moments of $\mathcal{L}$.
\end{thm}
\begin{proof}
A monic  FOP of degree $n$
$$\pi_n(\lambda)=\lambda^n+c_{n-1}\lambda^{n-1}+\ldots+c_1\lambda+c_0$$
exists if and only if 
$\mathcal{L}(\lambda^j\pi_n)=0$, for $j=0,\ldots,n-1$,
which gives the linear system \eqref{moments system n} with $k=n$.
Therefore if $\Delta_{n-1}\neq 0$, then the polynomial $\pi_n(\lambda)$ exists and is unique. 
If $\Delta_{n-1}=0$, then necessary and sufficient conditions for the existence of $\pi_{n}(\lambda)$ 
are given by Theorem \ref{theorem existence phi}: for $\Delta_{k-1}\neq 0$ and
$\Delta_k=\Delta_{k+1}=\ldots=\Delta_{n-1}=0$, there exist infinitely many $\pi_n(\lambda)$ if and only if $\Delta_{n}=\Delta_{n+1}=\ldots\Delta_{2n-k-1}=0$. 
\qed
\end{proof}

\noindent Note that by Theorem \ref{thm:exist:wop}, a linear functional $\mathcal{L}$ is quasi-definite on $\mathcal{P}_k$ if and only if $\Delta_j \neq 0$, for $j=0,1,\dots,k$; see, e.g., \cite[Chapter  I, Theorem 3.1]{ChiBook78}.

The second item of Theorem \ref{thm:exist:wop} can be interpreted in the following way: 
consider the sequence $\Delta_0, \Delta_1, \Delta_2, \dots$. 
Let $R=R(n-1)$ be the number of zeros in the sequence between $\Delta_{n-1}$ 
and the first nonzero element in the sequence after $\Delta_{n-1}$, i.e., 
$\Delta_{n-1+j}=0$ for $j=1,\ldots,R$ and $\Delta_{n+R}\neq 0$.
Note that the parameters $R(n-1)$ are known as Kronecker index, and the differences $R(n)-R(n-1)$ as Euclidean indices;
see \cite{BulVBaBook97,Kai80}.
Let $L=L(n-1)$ be the number of zeros in the sequence between $\Delta_{n-1}$ 
and the last nonzero element in the sequence before $\Delta_{n-1}$, i.e., 
$\Delta_{n-1-j}=0$ for $j=1,\ldots,L$ and $\Delta_{n-L-2}\neq 0$. 
A FOP of degree $n$ exists if and only if $R(n-1)>L(n-1)$.
Roughly said, there are ``more consecutive zeros to the right than to the left''.

Among the formal orthogonal polynomials the following cases can be distinguished;
see Definition on p.~47 of \cite{DraBook83}. 
\begin{dfn}
A formal orthogonal polynomial (FOP) $p_n(\lambda)$ is called \emph{regular} when $\Delta_{n-1}\neq 0$ (i.e., when it is unique),
while it is called \emph{singular} when $\Delta_{n-1} = 0$ (i.e., when it is not unique).
\end{dfn}

\begin{prop}\label{prop:orthreg}
   Let $p_k$ be a regular FOP and $j\ge 0$. Then $\mathcal{L}(p_k q) = 0$ for every $q \in \mathcal{P}_{k+j}$ if and only if $\Delta_k=\Delta_{k+1}=\ldots=\Delta_{k + j}=0$.
\end{prop}
\begin{proof}
 Without loss of generality, let us assume $p_k$ to be monic.
 The conditions $\mathcal{L}(p_k \, \lambda^{k+i}) = 0$, for $i=0,\dots,j$, lead to the system
 $$ -m_{2k+i}=c_0m_{k+i}+c_1m_{k+i+1}+\ldots+c_{k-1}m_{2k+i-1}, \quad i=0,\dots,j, $$
 with $c_0, \dots, c_{k-1}$ the unique solution of the
linear system \eqref{moments system n}.
 Theorem \ref{theorem key result} concludes the proof.
\end{proof}

Knowing all the integers $k$ such that $\Delta_k=0$
allows determining all the integers $n$ for which a FOP $p_n(\lambda)$ exists.

{\it Example 1.} If the zero-nonzero pattern of the sequence of Hankel determinants $\Delta_k$ is
$$
\begin{array}{cccccccccccccccc}
   \Delta_k & =  & \ast & \ast & 0 & \ast & \ast & 0 & 0 & 0 & 0 & \ast & 0 & 0 & 0 & \ast \\
   k        & =  & 0 & 1 & 2 & 3 & 4 & 5 & 6 & 7 & 8 & 9 & 10 & 11 & 12 & 13  
\end{array},
$$
then the FOPs of degree $3, 8, 9, 12$ and $13$ do not exist. 
There exist regular FOPs of degree $1, 2, 4, 5, 10$ and $14$ and
singular FOPs of degree $6, 7$ and $11$.

In order to fill the gaps in FOP sequences, we consider polynomials satisfying the following property.
\begin{dfn}\label{def:qop}
The polynomial $p_n(\lambda)$ is called \emph{quasi-orthogonal of order $k$} (or \emph{$k$-quasi-orthogonal}),
with $k<n$, when
$$\mathcal{L}(p_n\lambda^j)=0,\quad j=0,\ldots,n-k-1.$$
\end{dfn}

\vspace{-10pt}
\noindent Quasi-orthogonal polynomials of order $1$ were introduced by Riesz in \cite{Rie23} and then generalized to any order by Chihara in \cite{Chi57};
see also \cite[Definition 1.1, p.~51]{DraBook83}, \cite{Dra90}, \cite{Dra16},
and compare the definition with the concept of \emph{inner formal orthogonal polynomials} given in \cite[Definition 5.2]{hochbruck:1996}
and of \emph{left and right quasi-formally biorthogonal polynomials} in \cite[Definition 3.3]{Fre93b}.
Note that Definition \ref{def:qop} does not require $k$ to be minimal, i.e., it is not necessary that $\mathcal{L}(p_n\lambda^{n-k})\ne 0$. Thus a $k$-quasi-orthogonal polynomial of degree $n$ is also $j$-quasi-orthogonal for $j=k+1,\dots,n-1$. Also, any formal orthogonal polynomial of degree $n$ is $k$-quasi-orthogonal for $k=1,\dots,n-1$.

If $\Delta_{k-1} \neq 0$, then an $(n-k)$-quasi orthogonal polynomial of degree $n$ exists for every $n$ larger than $k$; 
see, e.g., \cite[Lemma 3.4]{Fre93b}. 
The following theorem will prove it together with the characterization of such polynomials; 
see discussion on pp.~47--51 of \cite{DraBook83}. 
\begin{thm}\label{theorem rr for mqop} 
Let $\Delta_0,\Delta_1,\dots$ be the Hankel determinants associated with the linear functional $\mathcal{L}$. 
Let $\Delta_{k-1}\neq 0$, and $\Delta_{k-1+i}=0$ for $i=1,\ldots,j$, 
and let $\pi_k(\lambda)$ be the regular monic FOP with respect to $\mathcal{L}$. 
Then all the monic $i$-quasi-orthogonal polynomials $\pi_{k+i}(\lambda)$ for $i=1,\dots,j$ are of the form
\begin{equation}\label{shape for monic qop}
\pi_{k+i}(\lambda)=\pi_k(\lambda)\prod_{t=1}^{i}(\lambda-\eta_t),  \quad  \eta_t \in \mathbb{C}.
\end{equation}
\end{thm}
\begin{proof}
The proof is by induction on $i$.
Let $i=1$. By Proposition \ref{prop:orthreg}, 
$\Delta_k=0$ if and only if $\pi_k(\lambda)$ is orthogonal to all polynomials of degree $k$.
Therefore $\lambda\pi_k(\lambda)$ is a monic polynomial of degree $k+1$ that is orthogonal to $\mathcal{P}_{k-1}$:
$$\mathcal{L}(\lambda\,\pi_k\,q)=\mathcal{L}(\pi_k\,(\lambda\,q))=0,\quad \textrm{ for } q(\lambda) \in \mathcal{P}_{k-1}.$$
Moreover, any polynomial of the form $(\lambda-\alpha)\pi_k(\lambda)$, $\alpha\in\mathbb{C}$,
is a monic $1$-quasi-orthogonal polynomial. 
On the other side, assume that $p_{k+1}(\lambda)$ is an arbitrary monic polynomial of
degree $k+1$ that is orthogonal to $\mathcal{P}_{k-1}$. 
Then the polynomial $\lambda\pi_k(\lambda) - p_{k+1}(\lambda)$ has the following two properties:
\begin{itemize}
  \item it is of degree $k$, 
  \item it is orthogonal to $\mathcal{P}_{k-1}$.
\end{itemize} 
Hence the uniqueness of $\pi_k(\lambda)$ gives $\lambda\pi_k(\lambda) - p_{k+1}(\lambda)=\beta \pi_k(\lambda)$ for a certain complex number $\beta$, i.e.,
$$p_{k+1}(\lambda)=(\lambda-\beta)\,\pi_k(\lambda),\quad \textrm{ for some } \beta \in\mathbb{C}.$$

Set $i$ between $2$ and $j-1$, 
and assume that all the monic $i$-quasi-orthogonal polynomials of degree $k+i$ are of the form \eqref{shape for monic qop}. 
By Proposition \ref{prop:orthreg},
$\Delta_k=\Delta_{k+1}=\ldots=\Delta_{k+i}=0$ 
if and only if $\pi_k(\lambda)$ is orthogonal to all polynomials of degree $k+i$. 
Therefore $\lambda\pi_{k+i}(\lambda)$ is a monic polynomial of degree $k+i+1$ that is orthogonal to $\mathcal{P}_{k-1}$:
$$\mathcal{L}(\lambda\,\pi_{k+i}\,q)=\mathcal{L}(\pi_k\,(\lambda\,(\lambda-\eta_1)\cdots(\lambda-\eta_i)\,q))=0,\quad \textrm{ for } q(\lambda) \in \mathcal{P}_{k-1}.$$ 
Clearly, $(\lambda-\alpha)\pi_{k+i}(\lambda)$ is a monic $(i+1)$-quasi-orthogonal polynomial of degree $k+i+1$, 
for any complex number $\alpha$. 
It remains to prove that an arbitrary monic polynomial of degree $k+i+1$ that is orthogonal to $\mathcal{P}_{k-1}$
is of the form $(\lambda-\beta)\pi_{k+i}(\lambda)$, where $\beta$ is a certain complex number, 
and $\pi_{k+i}(\lambda)$ is a polynomial of the form (\ref{shape for monic qop}). 
It can be done similarly to the case $i=1$.
\qed
\end{proof}

\begin{prop}
Let $\Delta_0,\Delta_1,\dots$ be the Hankel determinants associated with the linear functional $\mathcal{L}$ such that 
$\Delta_{k-1}\neq 0$ and $\Delta_{k+i}=0$ for $i=0,\dots,2j-1$. 
Then for $i=0,\dots,j$, $p_{k+i}(\lambda)$ is a FOP
if and only if it is $i$-quasi-orthogonal.
\end{prop}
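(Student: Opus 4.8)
The plan is to prove the two implications separately, noting that only one is nontrivial. If $p_{k+i}$ is a FOP of degree $k+i$, then by definition $\mathcal{L}(p_{k+i}\lambda^\ell)=0$ for $\ell=0,\dots,k+i-1$, and in particular for $\ell=0,\dots,k-1$; hence $p_{k+i}$ is $i$-quasi-orthogonal. This direction uses neither the hypothesis on the $\Delta$'s nor the restriction $i\leq j$, so the work lies entirely in the converse.

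For the converse, I would first reduce to the structural description of $i$-quasi-orthogonal polynomials. Since $\Delta_{k-1}\neq 0$ and $\Delta_k=\dots=\Delta_{k+j-1}=0$ (a consequence of the stronger hypothesis), Theorem \ref{theorem rr for mqop} applies and guarantees that every monic $i$-quasi-orthogonal polynomial of degree $k+i$, for $0\leq i\leq j$, factors as $\pi_k(\lambda)\,q_i(\lambda)$, where $\pi_k$ is the regular monic FOP of degree $k$ and $q_i$ is a monic polynomial of degree $i$. An arbitrary, not necessarily monic, $i$-quasi-orthogonal polynomial is a nonzero scalar multiple of such a polynomial, so up to scaling I may write $p_{k+i}=\pi_k\,q_i$ with $\deg q_i=i$.

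The key observation is the extended orthogonality of $\pi_k$. As derived from Theorem \ref{theorem key result} and noted just before the statement, the conditions $\Delta_{k-1}\neq 0$ and $\Delta_k=\dots=\Delta_{k+2j-1}=0$, which is where all $2j$ vanishing determinants are used, force $\pi_k$ to be orthogonal to $\mathcal{P}_{k+2j-1}$. Now take any $r\in\mathcal{P}_{k+i-1}$. Then $q_i\,r\in\mathcal{P}_{k+2i-1}$, and since $i\leq j$ we have $k+2i-1\leq k+2j-1$, so $q_i\,r\in\mathcal{P}_{k+2j-1}$. Consequently $\mathcal{L}(p_{k+i}\,r)=\mathcal{L}(\pi_k\,(q_i\,r))=0$ for all $r\in\mathcal{P}_{k+i-1}$, which is exactly the statement that $p_{k+i}$ is a FOP.

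The main obstacle is the sharp bookkeeping of degrees: the converse succeeds precisely because the hypothesis supplies $2j$ consecutive vanishing determinants rather than the $j$ needed merely for the existence of the quasi-orthogonal polynomials in Theorem \ref{theorem rr for mqop}. The extra $j$ zeros are what extend the orthogonality of $\pi_k$ far enough, to degree $k+2j-1$, to absorb both the factor $q_i$ of degree at most $j$ and the test polynomial $r$ of degree at most $k+j-1$. I would therefore present the degree count $\deg(q_i\,r)\leq k+2i-1\leq k+2j-1$ explicitly as the crux of the argument, since it is the only place where the bound $i\leq j$ and the full hypothesis enter.
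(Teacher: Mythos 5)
Your proposal is correct and follows essentially the same route as the paper: the trivial direction is dispatched by definition, and the converse uses the factorization $p_{k+i}=\pi_k\,q_i$ from Theorem \ref{theorem rr for mqop} together with the extended orthogonality of $\pi_k$ to $\mathcal{P}_{k+2j-1}$ (via Theorem \ref{theorem key result}) and the degree count $\deg(q_i\,r)\leq k+2i-1\leq k+2j-1$. The only difference is expository: you make explicit the bookkeeping (which $j$ of the vanishing determinants feed Theorem \ref{theorem rr for mqop} and which $2j$ feed the extended orthogonality) that the paper's terse proof leaves implicit.
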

\begin{proof}
 Clearly any FOP of degree $k+i$ is $i$-quasi-orthogonal.
 Vice versa if $p_{k+i}(\lambda)$ is $i$-quasi-orthogonal, 
 then it satisfies \eqref{shape for monic qop}.
 By Proposition~\ref{prop:orthreg}, $p_k(\lambda)$ is orthogonal to $\mathcal{P}_{k+2j-1}$. Therefore if $q(\lambda) \in \mathcal{P}_{k+i-1}$,
 then  $\mathcal{L} (p_{k+i} q) = \mathcal{L} (p_{k} (\lambda - \eta_1) \cdots (\lambda - \eta_i) q) = 0$ for $i=0,\dots,j$.
 \qed
\end{proof}

Consider the sequence of polynomials
\begin{equation}\label{eq:seq:p}
   p_0(\lambda), p_1(\lambda), p_2(\lambda), \dots 
\end{equation}
 constructed in the following way: 
$p_n(\lambda)$ is a regular FOP (when possible) or $p_n(\lambda)$ is a $(n-k)$-quasi-orthogonal polynomial, 
where $p_k(\lambda)$ is the last regular FOP before $p_n(\lambda)$. 
For later convenience, we consider every nonzero choice for $p_0(\lambda)$ as a regular FOP.
Let us denote by $\nu{(0)},\nu{(1)},\nu{(2)},\dots$ all the indexes for which $p_{\nu{(j)}}(\lambda)$ is a regular FOP,
i.e., $\Delta_{\nu{(j)-1}} \neq 0$ (setting $p_{\nu(0)}(\lambda) = p_0(\lambda) \neq 0$, and $\nu(j+1) = \infty$ when $p_{\nu(j)}(\lambda)$ is the last of the regular FOPs).
By Theorem \ref{theorem rr for mqop},
the quasi-orthogonal polynomials between two consecutive regular FOPs $p_{\nu(j)}(\lambda), p_{\nu(j+1)}(\lambda)$
satisfy the recurrences
\begin{equation}\label{eq:gen:rec:qorth}
    \beta_{n}p_{n}(\lambda) = \lambda p_{n-1}(\lambda) - \sum_{i=\nu(j)}^{n-1} \alpha_{n,i} p_i(\lambda), \quad n=\nu(j)+1,\dots,\nu(j+1)-1,
\end{equation}
for some coefficients $\alpha_{n,i} \in \mathbb{C}$ and $\beta_n \neq 0$;
see \cite[Theorem 1.5 and Remark 1.2]{DraBook83}.
Notice that any choice of $\alpha_{n,i}$ and $\beta_n \neq 0$ defines a $(n-\nu(j))$-quasi-orthogonal polynomial.
In particular, there exist families of such polynomials satisfying the two-term recurrences
\begin{equation}\label{two term rr}
    \beta_n p_{n}(\lambda)=(\lambda - \alpha_{n,n-1}) p_{n-1}(\lambda), \quad n=\nu(j)+1,\dots,\nu(j+1)-1;
\end{equation}
fixing $\alpha_{n,n-1}=0$ gives even simpler recurrences.

Setting $n = \nu{(j+1)}$ for some $j \geq 0$,
the regular FOP $p_n(\lambda)$ satisfies (see \cite[Theorem 1.5 and Remark 1.2]{DraBook83} and \cite[Theorem 2]{GraLin83}) 
\begin{equation}\label{long rr by Freund}
 \beta_n p_{n}(\lambda) = \lambda p_{n - 1}(\lambda) - \sum_{i=\nu(j)}^{n-1}\alpha_{n,i} p_i(\lambda) - \gamma_{n}p_{\nu(j-1)}(\lambda),
\end{equation}
with $p_{\nu(-1)}(\lambda) = p_{-1}(\lambda) = 0$, $\beta_n$ a nonzero coefficient, $\gamma_{\nu(1)} = 0$,
$$ \gamma_{n} = \frac{ \mathcal{L}( \lambda p_{n-1} p_{\nu(j)-1}) }{ \mathcal{L}(p_{\nu(j)-1}  p_{\nu(j-1)}) } \neq 0, \quad j \geq 1, $$
and $\alpha_{n,i}$ given by
\begin{equation}
\left[\begin{array}{ccc}\label{system for alpha}
\mathcal{L}( p_{\nu(j)}   p_{\nu(j)} ) &  \ldots & \mathcal{L}( p_{\nu(j)}   p_{n-1}) \\
\vdots &     & \vdots \\
\mathcal{L}( p_{n-1} p_{\nu(j)}) &\ldots &\mathcal{L}( p_{n-1} p_{n-1})
\end{array}  \right]
\left[\begin{array}{c} 
	\alpha_{n,\nu(j)} \\
	\vdots \\
	\alpha_{n,n-1}
\end{array}\right]
=
\left[\begin{array}{c} \mathcal{L}( \lambda p_{\nu(j)} p_{n-1} )\\  
                       \vdots \\
                       \mathcal{L}( \lambda p_{n-1} p_{n-1}) 
\end{array}
\right],
\end{equation}
where the matrix of the system is nonsingular; see, e.g., \cite[Theorem 2.3]{Fre93:num:math}. 
Notice that for a quasi-definite linear functional
the related (regular) formal orthonormal polynomials satisfy the three term recurrences \eqref{eq:3term:ast1}.

Given $n = 1, 2, \dots$, the recurrences \eqref{eq:gen:rec:qorth} and \eqref{long rr by Freund} 
can be expressed in the matrix form 
(see \cite[Section 1.7]{DraBook83}, \cite[Section 3]{pinar:ramirez}; c.f.,
\cite[pp.~221--222]{Gra74}, \cite[Figure 2 and Theorem 3]{GraLin83}, and \cite[Equalities (3.4) and (3.5)]{FreGutNac93})
\begin{equation}\label{eq:matrix:rec:relation}
\lambda \, \mathbf{p}(\lambda)
  =
  T_{n} \, \mathbf{p}(\lambda)
  +
  \beta_{n} p_n(\lambda) \mathbf{e}_n,
\end{equation}
 with $\mathbf{p}(\lambda) = [p_0(\lambda), p_1(\lambda), \dots, p_{n-1}(\lambda)]^T$
 and where $T_{n}$ is the block matrix 
  \begin{equation}\label{block:Tn}
 \renewcommand{\arraystretch}{1.2}
     T_{n} = \left [ 
     \,\,
    \begin{array}{ c c | c c | c c | c c}
    \cline{1-2}
    \multicolumn{1}{|c}{} & &  & \mc{ } &  & \mc{ } & & \\
    \multicolumn{2}{|c|}{\raisebox{.6\normalbaselineskip}[0pt][0pt]{$\quad A_0 \quad$}} & \beta_{\nu{(1)}}  & \mc{ } &   & \mc{ }  &  &  \\
    \cline{1-4}
      &   & & &   & \mc{ } & & \\
    \gamma_{\nu{(2)}} &   & \multicolumn{2}{c|}{\raisebox{.6\normalbaselineskip}[0pt][0pt]{$\quad A_1 \quad$}} & \multicolumn{1}{l}{\ddots} & \mc{ } & & \\
    \cline{3-6}
      & \mc{ } &   &   & & \multicolumn{1}{c|}{} & & \\
      & \mc{ } & \multicolumn{1}{l}{\ddots} &   & \multicolumn{2}{c|}{\raisebox{.6\normalbaselineskip}[0pt][0pt]{$\quad \ddots \quad$}} & \beta_{\nu{(\ell)}} & \\
    \cline{5-8}
      & \mc{ } &   & \mc{ } &              &   & & \multicolumn{1}{c|}{}   \\
      & \mc{ } &   & \mc{ } & \gamma_{n} &   & \multicolumn{2}{c|}{\raisebox{.6\normalbaselineskip}[0pt][0pt]{$\quad A_\ell^{n} \quad$}} \\
     \cline{7-8}
  \end{array}
  \,\,
      \right],
\end{equation}
with the coefficients $\beta_{\nu{(j)}}$ on the first upper diagonal, 
the coefficients $\gamma_{\nu{(j)}}$ in the position $(\nu{(j)},\nu{({j-2)}}+1)$, 
$\gamma_n = 0$ when $p_n(\lambda)$ is not regular, and 
$$ A_j^n = \left [
\begin{array}{ccccc}
  \alpha_{\nu{(j)}+1, \nu{(j)}}   & \beta_{\nu{(j)} +1}               & 0               & \dots  & 0  \\
  \alpha_{\nu{(j)}+2, \nu{(j)}}   & \alpha_{\nu{(j)} +2, \nu{(j)} + 1}     & \beta_{\nu{(j)} +2}  & \ddots & \vdots  \\
  \vdots                & \vdots                       &  \ddots         & \ddots & 0   \\
  \alpha_{n-1, \nu{(j)}}     & \alpha_{n-1, \nu{(j)} +1}         &  \dots          & \ddots & \beta_{n-1}  \\
  \alpha_{n, \nu{(j)}}       & \alpha_{n, \nu{(j)} +1}           &  \dots          & \dots            & \alpha_{n, n-1}  
\end{array}
\right],
$$
for $n=\nu{(j)}+1,\dots,\nu{({j+1)}}$; $A_j = A_j^{\nu{({j+1)}}}$ for simplicity.
Notice that using the recurrences \eqref{two term rr} with $\alpha_{n,n-1} = 0$ 
gives the sparse matrix
$$ A_j =  \left [  
\begin{array}{ccccc}
  0       & \beta_{\nu{(j)} +1}      & 0               	& \dots  & 0  \\
  0       & 0             	& \beta_{\nu{(j)} +2}  	& \ddots & \vdots  \\
  \vdots  & \vdots              &  \ddots         	& \ddots & 0   \\
  0       & 0             	&  \dots          	& 0      & \beta_{\nu{({j+1)}}-1}  \\
  \alpha_{\nu{({j+1)}}, \nu{(j)}} & \alpha_{\nu{({j+1)}}, \nu{(j)} +1}    &  \dots 	& \dots  & \alpha_{\nu{({j+1)}}, \nu{({j+1)}}-1}  
\end{array}
\right],
$$
with $\alpha_{\nu{({j+1)}}, \nu{(j)}}, \dots, \alpha_{\nu{({j+1)}}, \nu{({j+1)}}-1}$ obtained by \eqref{system for alpha}.

When the polynomials $p_0(\lambda),\dots, p_n(\lambda)$ are regular FOPs (the linear functional is quasi-definite on $\mathcal{P}_{n-1}$) 
the blocks $A_j$ are scalars.
Therefore $T_n$ is an irreducible tridiagonal matrix 
since $\beta_j$ and $\gamma_{j+1}$ are nonzero for $j=1,\dots,n-1$.
In particular, there exists a sequence of formal orthonormal polynomials
so that the matrix $T_n$ is the complex Jacobi matrix \eqref{eq:jacobi}.

\section{The Gauss quadrature for linear functionals}\label{sec:GQ}
Given a linear functional $\mathcal{L}$ and a smooth enough function $f(\lambda)$, 
consider a quadrature approximating $\mathcal{L}(f)$ of the form
(see \cite[Chapter 5]{DraBook83}, \cite[Section 2]{Mil03}, and \cite[Section 7]{PozPraStr16})
\begin{equation}\label{GQ multiple nodes}
\mathcal{G}_n(f) :=
\sum_{i=1}^{\ell}\sum_{j=0}^{s_i-1}\omega_{i,j}\,f^{(j)}(\lambda_i),\quad
n=s_1+\,\cdots\,+s_{\ell},
\end{equation}
with $\omega_{i,j}$ the weights, $\lambda_i$ the \emph{distinct} nodes, and $s_i$
the multiplicity of the node $\lambda_i$. 
Notice that the number of nodes $\ell$ can be less than $n$. 
The quadrature \eqref{GQ multiple nodes} will be referred as \emph{$n$-node quadrature}
when $\omega_{i, s_i-1} \neq 0$ for $i=1, \dots, \ell$.
Otherwise, the sum of the multiplicities would be smaller than $n$.
For any choice of (distinct) nodes $\lambda_1, \dots, \lambda_\ell$ and their multiplicities $s_i$,
such that $s_1+\,\cdots\,+s_{\ell}=n$, it is possible to achieve
that the quadrature (\ref{GQ multiple nodes}) is exact for any $f(\lambda) \in \mathcal{P}_{n-1}$. 
It is necessary and sufficient to set the weights as
\begin{equation}\label{weights}
   \omega_{i,j} = \mathcal{L}(h_{i,j}),
\end{equation}
where $h_{i,j}(\lambda)$ are polynomials from $\mathcal{P}_{n-1}$ such that
\begin{equation}\label{eq:hij}
\begin{split}
  h_{i,j}^{(t)}(\lambda_k) &= 1 \quad \textrm{ for } \lambda_k= \lambda_i \textrm{ and } t=j, \\
  h_{i,j}^{(t)}(\lambda_k) &= 0 \quad \textrm{ for } \lambda_k\neq \lambda_i \textrm{ or } t\neq j,
\end{split}
\end{equation}
with $k=1,2,\dots,\ell$, and $t=0,1,\dots,s_i-1$;
see \cite[Theorem 5.1]{DraBook83} or the proof of Theorem 7.1 in \cite{PozPraStr16}.
In this case (\ref{GQ multiple nodes}) is known as \emph{interpolatory} quadrature,
since it can be given by applying $\mathcal{L}$ to the generalized (Hermite) interpolating polynomial for the function $f(\lambda)$ at the nodes
$\lambda_i$ of the multiplicities $s_i$.
An interpolatory quadrature is completely determined by its nodes and multiplicities.
Therefore in the following a quadrature $\mathcal{G}_n$ will be said to be determined by a polynomial $p_n(\lambda)$
when it is an interpolatory quadrature \eqref{GQ multiple nodes}
with $\lambda_i$ being the roots of $p_n$, and $s_i$ the corresponding multiplicities of the roots.

The following definition is a straightforward extension to the complex case 
of the Gauss quadrature introduced by Draux in \cite[Chapter 5]{DraBook83}.
\begin{dfn}\label{def:WGQ}
The quadrature (\ref{GQ multiple nodes}) is called the \emph{$n$-node Gauss quadrature} when it is exact on the space $\mathcal{P}_{2n-1}$
and $\omega_{i,s_i-1} \neq 0$ for $i=1,\dots,\ell$ (the number of nodes, counting the multiplicities, is $n$).
\end{dfn}

\noindent We point out the following remarks:
\begin{itemize}
\item the algebraic degree of exactness of the $n$-node Gauss quadrature is allowed to be larger than $2n-1$;
\item a Gauss quadrature with smaller number of nodes may or may not exist when the $n$-node Gauss quadrature exists.
\end{itemize}
Hence the $n$-node Gauss quadrature generally does not satisfy properties G1--G3 in Section \ref{sec:qdef}.
However, when $\mathcal{L}$ is a quasi-definite linear functional, 
then the $n$-node Gauss quadrature for $\mathcal{L}$ satisfies properties G1--G3,
i.e., in this case Definition \ref{def:WGQ} is equivalent to the one in \cite{PozPraStr16,PozPraStr18}.

In order to give conditions for the existence of an $n$-node Gauss quadrature for a linear functional the following result is needed;
see \cite[Theorem 5.2]{DraBook83}, see also \cite[Theorem 1.45]{GauBook04} for positive definite linear functionals
and \cite[Theorem 7.1]{PozPraStr16} for quasi-definite linear functionals.

\begin{thm} \label{theorem about ADE} 
A quadrature $\mathcal{G}_n$ determined by a polynomial $p_n(\lambda)$ is exact for all the polynomials in $\mathcal{P}_{n + k - 1}$
if and only if $p_n(\lambda)$ is $(n-k)$-quasi-orthogonal.
\end{thm}
\begin{proof}
Assume $\mathcal{G}_n$ to be exact for every polynomial in $\mathcal{P}_{n + k - 1}$. 
Then $p_n(\lambda)$ is $(n-k)$-quasi-orthogonal. Indeed, 
$$ \mathcal{L}(p_n q) = \mathcal{G}_n(p_n q) = \sum_{i=1}^{\ell}\sum_{j=0}^{s_i-1}\omega_{i,j}\,(p_n q)^{(j)}(\lambda_i) = 0, \quad q(\lambda) \in \mathcal{P}_{k-1}, $$
since $p_n^{(j)}(\lambda_i) = 0$ for $j=0, \dots, s_i -1$, $i=1, \dots, \ell$.
Inversely, let $p_n(\lambda)$ be $(n-k)$-quasi-orthogonal.
Any $f(\lambda) \in \mathcal{P}_{n+k-1}$ can be written as $f(\lambda) = p_n(\lambda) q(\lambda) + r(\lambda)$ 
for some $q(\lambda) \in \mathcal{P}_{k-1}$  and $r(\lambda) \in \mathcal{P}_{n-1}$,
giving $\mathcal{L}(f) = \mathcal{L}(r)$.
Since $\mathcal{G}_{n}$ is interpolatory it is exact on $\mathcal{P}_{n-1}$ and thus 
$$ \mathcal{L}(f) = \mathcal{L}(r) = \sum_{i=1}^{\ell}\sum_{j=0}^{s_i-1}\omega_{i,j}\,r^{(j)}(\lambda_i). $$
The proof is concluded since $f^{(j)}(\lambda_i) = r^{(j)}(\lambda_i)$ for $j=0,\dots, s_i-1$, $i=1,\dots,\ell$.
\qed
\end{proof}
\noindent Note that the proof is a straightforward adaptation of the classical well-known argument used for proving the same result in the positive definite case.

As discussed in Section \ref{sec:FOP}, for every linear functional $\mathcal{L}$
there exists a sequence of polynomials $p_0(\lambda), p_1(\lambda), \dots$ \eqref{eq:seq:p}
so that $p_n(\lambda)$ is a regular FOP (when possible),
or $p_n(\lambda)$ is $(n-k)$-quasi-orthogonal, where $p_k(\lambda)$ is the last regular FOP before $p_n(\lambda)$
($p_0(\lambda) \neq 0$ is assumed to be regular). 
We denote by $\nu(0)=0, \nu(1), \dots$ the indexes of the regular FOPs 
(with $\nu(t+1) = +\infty$ when $\nu(t)$ is the last of the regular FOPs).
Theorem \ref{theorem about ADE} implies the following corollary (see \cite[Theorem 5.2]{DraBook83}).
\begin{cor}
   Let $p_n(\lambda)$ be a polynomial in the sequence described above. 
   \begin{itemize}
    \item If $p_n(\lambda)$ is a regular FOP,
	    then it determines a quadrature $\mathcal{G}_n$ exact for every polynomials in $\mathcal{P}_{2n-1}$.
\item if $p_n(\lambda)$ is a $(n-k)$-quasi-orthogonal polynomial,
	    then it determines a quadrature $\mathcal{G}_n$ exact for every polynomials in $\mathcal{P}_{n+k-1}$.
   \end{itemize}
\end{cor}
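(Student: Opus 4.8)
The plan is to deduce both items directly from Theorem \ref{theorem about ADE}, which characterizes the algebraic degree of exactness of a quadrature $\mathcal{G}_n$ determined by $p_n(\lambda)$ solely in terms of the order of quasi-orthogonality of $p_n$. The key observation that makes the corollary immediate is that a FOP of degree $n$ is, by definition, orthogonal to $\mathcal{P}_{n-1}$, i.e., it satisfies $\mathcal{L}(p_n \lambda^j) = 0$ for $j = 0, \dots, n-1$; comparing this with Definition \ref{def:qop}, this is precisely $0$-quasi-orthogonality. Hence a regular FOP is just a particular quasi-orthogonal polynomial, and both items reduce to invoking the same theorem with different values of the order parameter.

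For the first item, I would take $p_n(\lambda)$ to be a regular FOP, so that it is $0$-quasi-orthogonal. Writing the order as $n - k$ in the notation of Theorem \ref{theorem about ADE}, this corresponds to $k = n$. The theorem then asserts that the quadrature $\mathcal{G}_n$ determined by $p_n$ is exact on $\mathcal{P}_{n + k - 1} = \mathcal{P}_{2n-1}$, which is exactly the claimed maximal degree of exactness.

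For the second item, $p_n(\lambda)$ is by construction $(n-k)$-quasi-orthogonal, where $p_k(\lambda)$ is the last regular FOP preceding $p_n$ in the sequence \eqref{eq:seq:p}. Applying Theorem \ref{theorem about ADE} with the order taken to be $n - k$ (that is, with the theorem's parameter set to the same $k$) immediately yields that $\mathcal{G}_n$ is exact for every polynomial in $\mathcal{P}_{n + k - 1}$.

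The only point requiring care, and the closest thing to an obstacle, is the bookkeeping of indices: the parameter in Theorem \ref{theorem about ADE} must be correctly aligned with the order of quasi-orthogonality through the relation \emph{order} $= n - (\textrm{parameter})$, so that order $0$ produces exactness on $\mathcal{P}_{2n-1}$ and order $n-k$ produces exactness on $\mathcal{P}_{n+k-1}$. One should also confirm that $p_n$, as a polynomial of degree $n$ with nonzero leading coefficient, genuinely determines a well-defined interpolatory quadrature whose nodes are the roots of $p_n$ together with their multiplicities, so that Theorem \ref{theorem about ADE} applies as stated. Once these conventions are matched, no further computation is needed.
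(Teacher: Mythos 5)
Your proposal is correct and matches the paper's reasoning exactly: the paper presents this corollary as an immediate consequence of Theorem \ref{theorem about ADE}, and your argument is precisely that application, with the index bookkeeping (order $0$, i.e., parameter $k=n$, for a regular FOP giving exactness on $\mathcal{P}_{2n-1}$; order $n-k$ giving exactness on $\mathcal{P}_{n+k-1}$) handled correctly. Your side remark that a FOP is $0$-quasi-orthogonal is also consistent with the paper, which notes the same fact in the proof of Proposition \ref{theorem rr for mqop}'s companion result.
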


\noindent 
Notice that if $\nu{(1)} > 1$, then $\Delta_0 = \dots = \Delta_{\nu(1)-2} = 0$. Thus $m_j=0$ for $j=0,\dots,\nu{(1)} -2$ (see, e.g., \cite[Property 1.15]{DraBook83}) and, consequently, $\mathcal{G}_n(f) \equiv 0$ for $n=1, \dots, \nu(1) -1$.

If $\omega_{i, s_i-1} = 0$ for some $i$, then the quadrature \eqref{GQ multiple nodes} 
has a smaller number of nodes (counting the multiplicities).
The following lemmas deal with this issue; see \cite[Theorem 5.3]{DraBook83}.

\begin{lem}\label{lemma:Gnreg}
   Consider the quadratures $\mathcal{G}_n$ determined by the polynomial $p_n(\lambda)$ in the sequence described above. 
   Given two consecutive regular FOPs $p_{\nu(t)}(\lambda)$ and $p_{\nu(t+1)}(\lambda)$, with $t\geq 1$, then
   $$ \mathcal{G}_n = \mathcal{G}_{\nu(t)}, \quad \textrm{ for } n=\nu(t) + 1, \dots, \nu(t+1) -1.$$   
\end{lem}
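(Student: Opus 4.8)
The plan is to combine the interpolatory description of the two quadratures with the factorization of quasi-orthogonal polynomials from Theorem \ref{theorem rr for mqop}. Fix $n$ with $\nu(t) < n < \nu(t+1)$. Because $\Delta_{\nu(t)-1} \neq 0$ while $\Delta_{\nu(t)} = \cdots = \Delta_{\nu(t+1)-2} = 0$, Theorem \ref{theorem rr for mqop} yields a factorization $p_n(\lambda) = c\, p_{\nu(t)}(\lambda) \prod_{s=1}^{n-\nu(t)} (\lambda - \eta_s)$ with $c \neq 0$ and $\eta_s \in \mathbb{C}$. In particular $p_{\nu(t)}$ divides $p_n$, and every root of $p_{\nu(t)}$ is a root of $p_n$ of at least the same multiplicity.

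First I would write, for an admissible $f$, $\mathcal{G}_n(f) = \mathcal{L}(H_n)$ and $\mathcal{G}_{\nu(t)}(f) = \mathcal{L}(H_{\nu(t)})$, where $H_n$ and $H_{\nu(t)}$ are the Hermite interpolants of $f$ at the roots of $p_n$ and of $p_{\nu(t)}$ (with their respective multiplicities), as guaranteed by the interpolatory nature of a quadrature determined by a polynomial. Both interpolants match $f$ and its derivatives at each root of $p_{\nu(t)}$ up to the order dictated by its multiplicity in $p_{\nu(t)}$; hence the difference $H_n - H_{\nu(t)}$, a polynomial of degree at most $n-1$, vanishes to that order at every such point. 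By the observation of the previous paragraph this forces $p_{\nu(t)} \mid (H_n - H_{\nu(t)})$, so we may write $H_n - H_{\nu(t)} = p_{\nu(t)}\, g$ with $\deg g \le n - 1 - \nu(t)$.

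The remaining step is to evaluate $\mathcal{L}$: applying it gives $\mathcal{G}_n(f) - \mathcal{G}_{\nu(t)}(f) = \mathcal{L}(p_{\nu(t)}\, g)$, and I would conclude this vanishes from the orthogonality reserve of a regular FOP. By the remark following Theorem \ref{thm:exist:wop}, $p_{\nu(t)}$ is orthogonal to $\mathcal{P}_{\nu(t) + R(\nu(t)-1) - 1}$, and the present zero-pattern gives $R(\nu(t)-1) = \nu(t+1) - \nu(t) - 1$, so that $p_{\nu(t)}$ is orthogonal to $\mathcal{P}_{\nu(t+1) - 2}$. Since $\deg g \le n - 1 - \nu(t) \le \nu(t+1) - 2 - \nu(t) \le \nu(t+1) - 2$, the factor $g$ lies in this space and $\mathcal{L}(p_{\nu(t)}\, g) = 0$; thus $\mathcal{G}_n(f) = \mathcal{G}_{\nu(t)}(f)$ for every admissible $f$, which is the assertion.

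The hard part will be the bookkeeping in the divisibility step, namely checking that $H_n$ indeed reproduces the derivative data of $f$ at each root of $p_{\nu(t)}$ to the full multiplicity that $p_{\nu(t)}$ assigns to it; this hinges precisely on the fact, read off from the factorization, that these multiplicities can only increase in passing from $p_{\nu(t)}$ to $p_n$. Once that is in place, the argument is merely a degree count fed into the orthogonality of the regular FOP $p_{\nu(t)}$.
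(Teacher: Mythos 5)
Your proof is correct, and it takes a genuinely different route from the paper's, although both rest on the same two ingredients: the factorization $p_n(\lambda)=p_{\nu(t)}(\lambda)\prod_{s}(\lambda-\eta_s)$ from Theorem \ref{theorem rr for mqop}, and the orthogonality of the regular FOP $p_{\nu(t)}$ to $\mathcal{P}_{\nu(t+1)-2}$ coming from Theorem \ref{theorem key result} (your computation $R(\nu(t)-1)=\nu(t+1)-\nu(t)-1$ and the degree chain $\deg g\le n-1-\nu(t)\le\nu(t+1)-2$ both check out). The difference is in the packaging: you aggregate everything into the single functional identity $\mathcal{G}_n(f)-\mathcal{G}_{\nu(t)}(f)=\mathcal{L}(H_n-H_{\nu(t)})=\mathcal{L}(p_{\nu(t)}\,g)=0$ via the difference of Hermite interpolants, whereas the paper argues weight by weight: it shows each interpolatory basis polynomial $h_{i,j}$ is a multiple of $p_{\nu(t)}$ whenever $(\lambda-\lambda_i)^j$ is not a factor of $p_{\nu(t)}$, so those weights vanish, and then matches the surviving weights against those of $\mathcal{G}_{\nu(t)}$ using exactness of $\mathcal{G}_n$ on $\mathcal{P}_{n-1}$. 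Your version is shorter and avoids the case analysis on pairs $(i,j)$; the paper's version buys explicit structural information---that $\mathcal{G}_n$ has at most $\nu(t)$ node--derivative pairs---which is exactly what is invoked later in the proof of Theorem \ref{ade of wgq}. To recover that from your argument you should add one standard remark: since the evaluations $f\mapsto f^{(j)}(\lambda_i)$ at distinct node--derivative pairs are linearly independent already on $\mathcal{P}_{n-1}$ (unisolvence of Hermite interpolation), agreement of the two rules on all polynomials of degree at most $n-1$ forces the extra weights of $\mathcal{G}_n$ to vanish and the shared weights to coincide, so the rules are equal as formulas and not merely as linear functionals on admissible $f$. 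Your flagged bookkeeping point is indeed fine: the factorization shows the multiplicity of each root of $p_{\nu(t)}$ can only increase in $p_n$, so $H_n$ reproduces at least the derivative data that $H_{\nu(t)}$ does, giving $p_{\nu(t)}\mid(H_n-H_{\nu(t)})$ as claimed.
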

\begin{proof}
    Theorem \ref{theorem rr for mqop}  gives
    $$ p_n(\lambda) = p_{\nu(t)}(\lambda) q_{n - \nu(t)}(\lambda), $$   
    for some polynomial $q_{n - \nu(\lambda)}(\lambda)$.  
    Let $\lambda_1, \dots, \lambda_{\ell}$ be the roots of $p_n(\lambda)$ 
    with multiplicities $s_1, \dots, s_\ell$.
    The weights of the quadrature $\mathcal{G}_n$ are given by \eqref{weights}.
    Consider the pair $i,j$ so that $(\lambda - \lambda_i)^j$ is not a factor of $p_{\nu(t)}(\lambda)$,
    i.e., the root $\lambda_i$ is not a root of $p_{\nu(t)}(\lambda)$ or it is a root of $p_{\nu(t)}(\lambda)$
    but with $j$ greater than the multiplicity of $\lambda_i$ as a root of $p_{\nu(t)}(\lambda)$.
    Then the $(n-1)$-degree interpolatory polynomial $h_{i,j}(\lambda)$ defined in \eqref{eq:hij} is a multiple of $p_{\nu(t)}(\lambda)$, i.e.,
    $$ h_{i,j}(\lambda) = p_{\nu(t)}(\lambda) r_{n - \nu(t) - 1}(\lambda), $$
     for some polynomial $r_{n - \nu(t) - 1}(\lambda)$.
     By Proposition \ref{prop:orthreg}, $p_{\nu(t)}(\lambda)$ is orthogonal to $\mathcal{P}_{\nu(t+1) -2}$,
     giving
     $$ \omega_{i,j} = \mathcal{L}(h_{i,j}) = \mathcal{L} (p_{\nu(t)} r_{n - \nu(t) - 1}) = 0. $$
    Therefore $\mathcal{G}_n$ has at most $\nu(t)$ nodes.
    Moreover, each node of $\mathcal{G}_n$ is a node of $\mathcal{G}_{\nu(t)}$
    and has multiplicity smaller than or equal to the one of the corresponding node of $\mathcal{G}_{\nu(t)}$.
     
    If $\lambda_i$ is a root of $p_{\nu(t)}(\lambda)$ with multiplicity $j$,
    then there exists a polynomial $\widetilde{h}_{i,j}(\lambda)$ of the kind of \eqref{eq:hij} 
    so that $\widetilde{\omega}_{i,j} = \mathcal{L}(\widetilde{h}_{i,j})$ is the corresponding weight of $\mathcal{G}_{\nu(t)}$.
    Since $\widetilde{h}_{i,j}(\lambda)$ has degree $\nu(t) - 1$ the weight  $\widetilde{\omega}_{i,j}$ is given by
    $$ \widetilde{\omega}_{i,j} = \mathcal{G}_n (\widetilde{h}_{i,j}). $$
    Noticing that $\mathcal{G}_n (\widetilde{h}_{i,j}) = \omega_{i,j}$  concludes the proof. 
    \qed
\end{proof}

\begin{lem}\label{lemma:nonzeroweight}
   If $p_n(\lambda)$ is a regular FOP, with $n \geq 1$, then it determines a quadrature \eqref{GQ multiple nodes} 
   such that $\omega_{i, s_i -1} \neq 0$, for $i=1,\dots,\ell$.
 \end{lem}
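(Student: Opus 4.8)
The plan is to isolate the top weight $\omega_{i,s_i-1}$ at each node and express it as a nonzero scalar multiple of $\mathcal{L}\!\left(p_n/(\lambda-\lambda_i)\right)$, then to show that this quantity cannot vanish by using the regularity assumption $\Delta_{n-1}\neq0$. Since the nodes, multiplicities, and the weights \eqref{weights} of the interpolatory quadrature determined by $p_n$ depend only on the roots of $p_n$ and not on its normalization, I would first take $p_n$ monic and write $p_n(\lambda)=\prod_{i=1}^{\ell}(\lambda-\lambda_i)^{s_i}$.

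First I would identify the Hermite basis polynomial $h_{i,s_i-1}$ attached to the highest derivative at the node $\lambda_i$. The polynomial $r_i(\lambda):=p_n(\lambda)/(\lambda-\lambda_i)$ is monic of degree $n-1$; it carries the factor $(\lambda-\lambda_k)^{s_k}$ for every $k\neq i$ and the factor $(\lambda-\lambda_i)^{s_i-1}$, so a direct check shows that the scalar multiple $C_i r_i$, with $C_i=[(s_i-1)!\prod_{k\neq i}(\lambda_i-\lambda_k)^{s_k}]^{-1}\neq0$, satisfies the interpolation conditions \eqref{eq:hij} that define $h_{i,s_i-1}$. By the uniqueness of the degree-$(n-1)$ Hermite interpolant, $h_{i,s_i-1}=C_i r_i$, and hence by \eqref{weights} the top weight equals $\omega_{i,s_i-1}=C_i\,\mathcal{L}(r_i)$. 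It therefore suffices to prove $\mathcal{L}(r_i)\neq0$.

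To do this I would argue by contradiction, assuming $\mathcal{L}(r_i)=0$. Using the elementary decomposition $\lambda^j=(\lambda-\lambda_i)\sigma_{j-1}(\lambda)+\lambda_i^j$ with $\deg\sigma_{j-1}=j-1$, one gets $r_i\lambda^j=p_n\sigma_{j-1}+\lambda_i^j r_i$; since $p_n$ is a FOP it is orthogonal to $\mathcal{P}_{n-1}$, so $\mathcal{L}(p_n\sigma_{j-1})=0$ for $j\leq n-1$, whence $\mathcal{L}(r_i\lambda^j)=\lambda_i^j\mathcal{L}(r_i)=0$ for $j=0,\dots,n-1$. Writing $r_i(\lambda)=\sum_{t=0}^{n-1}r_t\lambda^t$ with $r_{n-1}=1$, these $n$ relations read $\sum_{t=0}^{n-1}r_t\,m_{t+j}=0$ for $j=0,\dots,n-1$, i.e. $H_{n-1}\mathbf{r}=0$ with $H_{n-1}$ the $n\times n$ Hankel matrix \eqref{eq:hankel:subm} and $\mathbf{r}=[r_0,\dots,r_{n-1}]^T\neq0$. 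Hence $\Delta_{n-1}=\det H_{n-1}=0$, contradicting the regularity of $p_n$. Therefore $\mathcal{L}(r_i)\neq0$, so $\omega_{i,s_i-1}\neq0$ for every $i$.

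I expect the only delicate step to be the first one, namely the clean identification $h_{i,s_i-1}=C_i\,p_n/(\lambda-\lambda_i)$: one must verify that $C_i r_i$ meets all the conditions \eqref{eq:hij}, including that its derivatives of orders $0,\dots,s_i-2$ at $\lambda_i$ and of orders $0,\dots,s_k-1$ at every other node $\lambda_k$ vanish, while its $(s_i-1)$-th derivative at $\lambda_i$ equals $1$. Once this reduction is in place, the contradiction through the Hankel matrix $H_{n-1}$ is immediate, and it is precisely the mechanism that converts the regularity of $p_n$ into the non-degeneracy of the top weights.
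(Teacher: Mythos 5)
Your proof is correct, but it follows a genuinely different route from the paper's. The paper never writes $h_{i,s_i-1}$ in closed form; instead it tests it against the \emph{previous} regular FOP $p_{\nu(t-1)}$ in the sequence and evaluates $\mathcal{L}(h_{i,s_i-1}\,p_{\nu(t-1)})$ two ways: since the product has degree at most $(n-1)+\nu(t-1)\leq 2n-2$, the exactness of $\mathcal{G}_n$ on $\mathcal{P}_{2n-1}$ applies, and the Leibniz rule collapses the quadrature sum to $\omega_{i,s_i-1}\,p_{\nu(t-1)}(\lambda_i)$; on the other hand, Theorem \ref{theorem key result} shows this value is nonzero (because $p_{\nu(t-1)}$ is orthogonal to $\mathcal{P}_{\nu(t)-2}$ but cannot be orthogonal to $\lambda^{\nu(t)-1}$ when $\Delta_{\nu(t)-1}\neq 0$), forcing $\omega_{i,s_i-1}\neq 0$. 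You instead identify $h_{i,s_i-1}=C_i\,p_n/(\lambda-\lambda_i)$ explicitly (your verification of the conditions \eqref{eq:hij}, with $C_i=[(s_i-1)!\prod_{k\neq i}(\lambda_i-\lambda_k)^{s_k}]^{-1}$ well defined since the nodes are distinct, is sound, as is the appeal to uniqueness of the degree-$(n-1)$ Hermite interpolant), reduce via \eqref{weights} to showing $\mathcal{L}\bigl(p_n/(\lambda-\lambda_i)\bigr)\neq 0$, and get that from a clean linear-algebra contradiction: the orthogonality of the FOP $p_n$ to $\mathcal{P}_{n-1}$ turns the assumption $\mathcal{L}(r_i)=0$ into $\mathcal{L}(r_i\lambda^j)=\lambda_i^j\mathcal{L}(r_i)=0$ for $j=0,\dots,n-1$, so the coefficient vector of the monic $r_i$ would be a nonzero kernel vector of $H_{n-1}$, contradicting $\Delta_{n-1}\neq 0$. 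Your argument is more self-contained and elementary: it uses only the two defining facts of a regular FOP (orthogonality to $\mathcal{P}_{n-1}$ and $\Delta_{n-1}\neq 0$, i.e.\ nondegeneracy of the moment form on $\mathcal{P}_{n-1}$), avoids the sequence of regular FOPs, Theorem \ref{theorem key result}, and the quadrature's exactness altogether, and yields the explicit formula $\omega_{i,s_i-1}=C_i\,\mathcal{L}\bigl(p_n/(\lambda-\lambda_i)\bigr)$ for the top weights. What the paper's route buys in exchange is that it stays entirely within the quadrature/FOP-sequence machinery developed in Sections \ref{sec:FOP}--\ref{sec:GQ}, and it delivers as a byproduct the fact that $p_{\nu(t-1)}(\lambda_i)\neq 0$, i.e.\ consecutive regular FOPs share no roots --- a structural fact your computation does not see.
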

 \begin{proof}
   Let $t$ be such that $p_{\nu(t)}(\lambda) = p_n(\lambda)$ and $h_{i,j}(\lambda)$ as in \eqref{eq:hij}, 
   then 
   \begin{eqnarray*}
    \mathcal{L}(h_{i, s_i-1} p_{\nu(t-1)}) &=& \sum_{r=1}^\ell \sum_{s=0}^{s_i -1} \omega_{r,s} (h_{i, s_i-1} p_{\nu(t-1)})^{(s)} (\lambda_r) \\
		    &=& \sum_{r=1}^\ell \sum_{s=0}^{s_i -1} \omega_{r,s} \sum_{u=0}^{s} {s \choose u} h_{i, s_i-1}^{(u)}(\lambda_r) p_{\nu(t-1)}^{(s-u)} (\lambda_r) \\   
		    &=& \sum_{s=0}^{s_i -1} \omega_{i,s} \sum_{u=0}^{s} {s \choose u} h_{i, s_i-1}^{(u)}(\lambda_i) p_{\nu(t-1)}^{(s-u)} (\lambda_i) \\
	            &=& \omega_{i,s_i-1} p_{\nu(t-1)} (\lambda_i).	            
   \end{eqnarray*}
  Proposition \ref{prop:orthreg} gives $\mathcal{L}(h_{i, s_i-1} p_{\nu(t-1)}) \neq 0$, concluding the proof.
  \qed
 \end{proof}

\noindent The following theorem summarizes the previous discussion; see \cite[Theorems 5.2 and 5.3]{DraBook83}.

\begin{thm}\label{ade of wgq}
The $n$-node Gauss quadrature $\mathcal{G}_n$ exists (and is unique) if and only if $\Delta_{n-1}\neq 0$.
Moreover, if $\Delta_{n} = \Delta_{n+1} = \dots = \Delta_{n+j} = 0$,
then $\mathcal{G}_n$ has degree of exactness at least $2n + j$.
In particular, if $n = \nu(t)$, then $\mathcal{G}_n$ has (maximal) degree of exactness $\nu(t) + \nu(t+1) -2$,
with $\nu(t+1) = +\infty$ when $n$ is the last of the regular FOPs.
\end{thm}
\begin{proof}
   By Theorem \ref{theorem about ADE}, $\mathcal{G}_n$ is exact on $\mathcal{P}_{2n-1}$ 
   if and only if it is determined by a FOP with degree $n$, i.e., a polynomial $p_n(\lambda)$ orthogonal to $\mathcal{P}_{n-1}$.
   By Lemma \ref{lemma:Gnreg} if $p_n(\lambda)$ is a singular FOP,
   then $\mathcal{G}_n$ has not $n$ nodes. 
   Therefore it is not a $n$-node Gauss quadrature.
   Considering Lemma \ref{lemma:nonzeroweight} and noticing that regular FOPs are unique,
   $\mathcal{G}_n$ exists and is unique if and only if $\Delta_{n-1}\neq 0$.
   The proof is conclude noticing that Theorem \ref{theorem about ADE} and Lemma \ref{lemma:Gnreg} imply 
   that $\mathcal{G}_{n}$ is exact on $\mathcal{P}_{2n+j}$.
   \qed
\end{proof}

\section{Matrix formulation of the Gauss quadrature}\label{sec:mmp}
If $\mathcal{L}$ is a quasi-definite linear functional,
then the associated complex Jacobi matrix \eqref{eq:jacobi} satisfies the Matching Moment Property \eqref{eq:mmp:qdef}.
We will give an original proof of an extension of the Matching Moment Property for a general sequence of moments 
using the properties of the formal orthogonal polynomials and of the Gauss quadrature for the linear functionals.
The presented extension also considers the case of moments so that $m_0 = \dots = m_{\nu(1)} = 0$.
The case of a linear functional of the kind $\mathcal{L}(f) = \mathbf{w}^* f(A) \mathbf{v}$,  with $m_0 \neq 0$, was treated in \cite[Theorem 2.10]{GuoRen04}. 
We remark that assuming real moments (with a straightforward extension to the complex case),
the Matching Moment Property presented here, as well as the ones in \cite{FreHoc93,GuoRen04,PozPraStr16}, 
can be derived by Theorem 5 of the 1983 paper by Gragg and Lindquist \cite{GraLin83},
where such property is related to the minimal partial realization problem.

Let  $\mathcal{L}: \mathcal{P} \rightarrow \mathbb{C}$ be a linear functional
and let $T_n$ be the corresponding block tridiagonal matrix \eqref{block:Tn} associated with the sequence of polynomials $p_0(\lambda),\dots,p_n(\lambda)$. 
Denote by $p_{\nu{(t)}}(\lambda)$ the subsequence of the regular FOPs
and recall that for $\nu{(t)} < n < \nu{({t+1)}}$ the polynomials $p_n(\lambda)$ are $(n-\nu{(t)})$-quasi-orthogonal.
Also recall that if $\nu{(1)} \geq 2$, then $m_j=0$ for $j=0,\dots,\nu{(1)} -2$. 
Since the elements in the superdiagonal of $T_n$ are nonzero
the block tridiagonal matrix $T_n$ is nonderogatory, i.e., its eigenvalues have geometric multiplicity $1$.
Indeed, if $\lambda$ is an eigenvalue, 
then deleting the first column and the last row of $T_n - \lambda I$ 
gives a lower triangular nonsingular matrix (with $I = [\mathbf{e}_1, \dots, \mathbf{e}_n]$ the identity matrix). 
Thus the null space of $T_n - \lambda I$ has dimension $1$. 
Proving the Matching Moment Property will need the following lemmas.

\begin{lem}\label{thm:char:poly}
   Let $T_n$ and $p_n(\lambda)$ be as in \eqref{eq:matrix:rec:relation}. Then $p_n(\lambda)$ is the characteristic polynomial of $T_n$ (up to a nonzero rescaling).
\end{lem}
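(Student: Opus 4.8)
The plan is to show that $p_n(\lambda)$ and the characteristic polynomial of $T_n$ agree by exploiting the matrix recurrence relation \eqref{eq:matrix:rec:relation}. First I would observe that both polynomials have degree $n$: the polynomial $p_n(\lambda)$ has degree $n$ by construction, and $T_n$ is an $n \times n$ matrix so its characteristic polynomial $\chi_n(\lambda) = \det(\lambda I - T_n)$ also has degree $n$. Since the statement only claims equality up to a nonzero scaling, it suffices to show that every root of $\chi_n$ is a root of $p_n$, or more directly that $p_n(\lambda) = 0$ precisely at the eigenvalues of $T_n$.

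The key step is to evaluate the matrix recurrence \eqref{eq:matrix:rec:relation} at an eigenvalue. Suppose $\lambda$ is an eigenvalue of $T_n$. The relation reads
\begin{equation*}
  \lambda \, \mathbf{p}(\lambda) = T_n \, \mathbf{p}(\lambda) + \beta_n p_n(\lambda)\, \mathbf{e}_n,
\end{equation*}
which can be rewritten as $(\lambda I - T_n)\mathbf{p}(\lambda) = \beta_n p_n(\lambda)\,\mathbf{e}_n$. Here $\mathbf{p}(\lambda) = [p_0(\lambda), \dots, p_{n-1}(\lambda)]^T$ has first entry $p_0(\lambda) \neq 0$ (a nonzero constant), so $\mathbf{p}(\lambda)$ is never the zero vector for any $\lambda$. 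Now I would use the fact, established in the paragraph preceding the lemma, that $T_n$ is nonderogatory: its eigenvalues have geometric multiplicity one. If $\lambda$ is an eigenvalue, then $\lambda I - T_n$ is singular, and I would argue that $p_n(\lambda) = 0$. Indeed, if $p_n(\lambda) \neq 0$, then $(\lambda I - T_n)\mathbf{p}(\lambda) = \beta_n p_n(\lambda)\,\mathbf{e}_n \neq \mathbf{0}$; but this forces $\mathbf{e}_n$ to lie in the range of $\lambda I - T_n$, and combined with the fact that the left null space of $\lambda I - T_n$ is one-dimensional and (by the nonderogatory structure argued via deleting the first column and last row) does not annihilate $\mathbf{e}_n$, one reaches a contradiction. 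Thus every eigenvalue of $T_n$ is a root of $p_n$.

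The cleaner route, which I would prefer, is to evaluate at the eigenvalues directly from the triangular structure. When $\lambda$ is an eigenvalue, the argument given in the excerpt shows that deleting the first column and last row of $\lambda I - T_n$ yields a nonsingular lower triangular matrix, so the null space of $(\lambda I - T_n)^T$ is spanned by a vector $\mathbf{w}$ whose last component is nonzero (this follows from solving the homogeneous system $\mathbf{w}^T(\lambda I - T_n) = 0$ using the nonzero superdiagonal entries $\beta_{\nu(j)}$). Left-multiplying the rewritten recurrence by $\mathbf{w}^T$ gives $0 = \mathbf{w}^T(\lambda I - T_n)\mathbf{p}(\lambda) = \beta_n p_n(\lambda)\,(\mathbf{w}^T \mathbf{e}_n)$, and since $\beta_n \neq 0$ and $\mathbf{w}^T \mathbf{e}_n \neq 0$, we conclude $p_n(\lambda) = 0$. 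This shows the $n$ eigenvalues of $T_n$ (counted with multiplicity, using the nonderogatory property to control multiplicities) are roots of the degree-$n$ polynomial $p_n$, so $p_n$ and $\chi_n$ share all their roots with matching multiplicities and hence agree up to a scalar.

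The main obstacle I anticipate is the bookkeeping around multiplicities: the argument above readily shows that each eigenvalue is a root of $p_n$, but to conclude equality of the two degree-$n$ polynomials one must ensure the multiplicities match. The nonderogatory property guarantees each eigenvalue has a single Jordan block, so the algebraic multiplicity as a root of $\chi_n$ equals the size of that block; I would need to verify that $p_n$ vanishes to exactly that order, which can be handled by differentiating the matrix recurrence or by a dimension count comparing the total multiplicities (both sum to $n$). Since both polynomials are monic of degree $n$ up to scaling and every root of $\chi_n$ is a root of $p_n$, a clean way to finish is to note that $p_n$ cannot have roots outside the spectrum of $T_n$ without exceeding degree $n$, so the root sets coincide and the multiplicities are forced to agree, completing the proof.
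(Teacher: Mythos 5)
Your opening argument is sound and, in fact, stronger than you state: from $(\lambda I - T_n)\mathbf{p}(\lambda) = \beta_n p_n(\lambda)\,\mathbf{e}_n$ the unreduced Hessenberg structure of $T_n$ (all superdiagonal entries $\beta_1,\dots,\beta_{n-1}$ are nonzero, not only the block-boundary ones $\beta_{\nu(j)}$ you mention) does force any left null vector of $\lambda I - T_n$ to have nonzero last component, so every eigenvalue of $T_n$ is a root of $p_n$; and the converse is immediate, since $p_n(\lambda_0)=0$ gives $(\lambda_0 I - T_n)\mathbf{p}(\lambda_0)=0$ with $\mathbf{p}(\lambda_0)\neq 0$. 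The genuine gap is your final step: equal root sets together with equal degree do \emph{not} force equal multiplicities --- compare $(\lambda-1)^2(\lambda-2)$ with $(\lambda-1)(\lambda-2)^2$ --- so ``the multiplicities are forced to agree'' is unjustified, and the justification you offer (``$p_n$ cannot have roots outside the spectrum without exceeding degree $n$'') is circular, because counting roots against the degree already presumes the multiplicity matching you are trying to establish. The nonderogatory property controls the Jordan structure of $T_n$, not the vanishing order of $p_n$, so by itself it does not close this step either.

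The patch you gesture at (differentiating the recurrence) can be made to work, but in the direction opposite to the one you propose: if $\lambda_i$ is a root of $p_n$ of multiplicity $t_i$, differentiating gives $(T_n-\lambda_i I)\,\mathbf{p}^{(k)}(\lambda_i)/k! = \mathbf{p}^{(k-1)}(\lambda_i)/(k-1)!$ for $k<t_i$, a Jordan chain of length $t_i$, whence $t_i \le s_i$; since $\sum_i t_i = n = \sum_i s_i$, equality follows. A shorter route avoids multiplicities entirely: multiplying the recurrence by $\mathrm{adj}(\lambda I - T_n)$ yields $\chi_n(\lambda)\,\mathbf{p}(\lambda) = \beta_n p_n(\lambda)\,\mathrm{adj}(\lambda I - T_n)\,\mathbf{e}_n$, and the $(1,n)$ entry of the adjugate is the constant $\prod_{j=1}^{n-1}\beta_j \neq 0$ by the same triangular-minor computation used for the nonderogatory remark, so comparing first components gives the polynomial identity $p_0\,\chi_n(\lambda) = \bigl(\prod_{j=1}^{n}\beta_j\bigr)\,p_n(\lambda)$, which is the lemma with the scaling made explicit. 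For calibration: the paper does not prove this lemma at all, deferring to Lemma 2 of Kautsk\'y and Theorem 1.11 of Draux; the adjugate identity above is essentially that classical argument, and your left-null-vector computation is a correct but incomplete fragment of it.
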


\noindent Lemma \ref{thm:char:poly} is a consequence of Lemma 2 in \cite{kautsky:81};
see also \cite[Theorem 1.11]{DraBook83}.

\begin{lem}\label{lemma:n-1}
 Let $T_1, T_2, \dots$ be a sequence of block tridiagonal matrices \eqref{block:Tn}.
 For $n \geq \nu{(1)} + 1$ the matrices $T_{n-1}$ and $T_n$ satisfy
 $$ \mathbf{e}_1^T \, (T_{n-1})^k \, \mathbf{e}_{\nu{(1)}} = \mathbf{e}_1^T \, (T_{n})^k \, \mathbf{e}_{\nu{(1)}}, \quad \text{ for }  k=0,\dots,n-1,$$
where the vectors $\mathbf{e}_1, \mathbf{e}_{\nu(1)}$ have dimension $n-1$ on the left-hand side
and $n$ on the right-hand side (we use the same notation for the sake of simplicity).
\end{lem}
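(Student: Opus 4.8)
The plan is to exploit two structural features of the block tridiagonal matrices \eqref{block:Tn}: their nesting and their upper Hessenberg pattern. First I would observe that $T_{n-1}$ is exactly the leading $(n-1)\times(n-1)$ principal submatrix of $T_n$. This follows from the recurrence form \eqref{eq:matrix:rec:relation}: for $m\leq n-1$ the $m$-th row of $T_n$ encodes the expansion of $\lambda p_{m-1}$ in $p_0,\dots,p_{m}$, which does not involve $p_n$, so rows and columns $1,\dots,n-1$ of $T_n$ coincide with those of $T_{n-1}$. Second, since $\lambda p_{m-1}$ has degree $m$, the coefficient of $p_i$ vanishes for $i>m$; equivalently $(T_n)_{m,i}=0$ whenever $i>m+1$. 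Thus $T_n$ is upper Hessenberg, with nonzero superdiagonal entries inherited from \eqref{block:Tn}.

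Having recorded these facts, I would interpret $\mathbf{e}_1^T (T_n)^k \mathbf{e}_{\nu{(1)}}$ combinatorially as the $(1,\nu{(1)})$ entry of $(T_n)^k$, i.e. the weighted sum over all walks of length $k$ from vertex $1$ to vertex $\nu{(1)}$ in the weighted digraph of $T_n$. Because $T_{n-1}$ is the leading principal submatrix, the walks counted by $\mathbf{e}_1^T (T_{n-1})^k \mathbf{e}_{\nu{(1)}}$ are precisely those that never visit the vertex $n$. Hence the difference of the two sides equals the contribution of the walks that \emph{do} visit $n$, and it suffices to show this contribution vanishes for $k\leq n-1$.

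The decisive use of the Hessenberg property is a reachability bound: since $(T_n)_{i,j}=0$ for $j>i+1$, a single step can raise the vertex index by at most $1$, so after $k$ steps starting from vertex $1$ the index is at most $1+k$. Consequently, reaching vertex $n$ requires at least $n-1$ steps. For a walk of length $k\leq n-1$ from $1$ to $\nu{(1)}$ that visits $n$, the only possibility is $k=n-1$ together with the strictly increasing walk $1\to 2\to\cdots\to n$, which terminates at $n$. But $n\geq \nu{(1)}+1$ forces $\nu{(1)}<n$, so such a walk cannot end at $\nu{(1)}$. Therefore no admissible walk visits $n$, and the two entries agree for every $k=0,\dots,n-1$.

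I expect the only delicate point to be the bookkeeping that turns the recurrences \eqref{eq:matrix:rec:relation} into the two structural statements (nesting and upper Hessenberg); once these are in place the counting argument is immediate. An equivalent, fully algebraic route avoids walks altogether: an induction on $k$ shows that for $k\leq n-1$ the row vector $\mathbf{e}_1^T(T_n)^k$ has its first $n-1$ entries equal to $\mathbf{e}_1^T(T_{n-1})^k$ and (for $k\leq n-2$) vanishing last entry, using that the last column of $T_n$ has only the single nonzero entry $\beta_{n-1}$ in row $n-1$; reading off the $\nu{(1)}$-th entry, which is legitimate since $\nu{(1)}\leq n-1$, then yields the claim directly.
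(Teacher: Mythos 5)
Your proof is correct, but it runs in the opposite direction from the paper's and uses strictly less structure. The paper also reduces the claim to a sparsity statement about powers of $T_n$, but it propagates on the right: it sets $\mathbf{u}_k=(T_n)^k\,\mathbf{e}_{\nu(1)}$ and explicitly chases the nonzero patterns of $\mathbf{u}_1,\mathbf{u}_2,\dots$ through the block structure of \eqref{block:Tn} --- in particular the positions $(\nu(j),\nu(j-2)+1)$ of the $\gamma$'s, which make the support jump down to coordinate $\nu(2)$ at step $\nu(1)$, then creep upward, and so on --- concluding that the last coordinate of $\mathbf{u}_k$ stays zero for $k\le n-1$, so that $\mathbf{e}_1^T\mathbf{u}_k$ involves only the leading principal submatrix $T_{n-1}$. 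You instead propagate $\mathbf{e}_1^T$ on the left and use only that $T_n$ is upper Hessenberg: one step raises the vertex index by at most one, so a walk of length $k\le n-1$ starting at $1$ and ending at $\nu(1)\le n-1$ can never visit vertex $n$, and the nesting $T_{n-1}=T_n(1:n-1;1:n-1)$ does the rest. This is genuinely leaner: it needs none of the bookkeeping of the $\gamma$ positions or of the indices $\nu(t)$ (the paper's pattern-chasing is exactly that bookkeeping), it yields the stronger statement $\mathbf{e}_1^T(T_{n-1})^k\mathbf{e}_j=\mathbf{e}_1^T(T_n)^k\mathbf{e}_j$ for every $j\le n-1$ and $k\le n-1$, and it applies verbatim to any nested family of upper Hessenberg matrices. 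What the paper's column-side computation buys in exchange is finer information on where the nonzeros of $(T_n)^k\mathbf{e}_{\nu(1)}$ actually sit, tied to the regular indices $\nu(t)$ --- information the lemma does not need. Two small remarks: your nesting claim is indeed what the hypothesis ``sequence of matrices \eqref{block:Tn}'', built from one fixed polynomial sequence \eqref{eq:seq:p} via \eqref{eq:matrix:rec:relation}, provides, and the paper uses it tacitly in the same way; and in your algebraic variant the last column of $T_n$ also carries the diagonal entry $\alpha_{n,n-1}$ in row $n$, so ``only the single nonzero entry $\beta_{n-1}$'' should be restricted to rows $1,\dots,n-1$ --- harmless for your induction, since that entry is multiplied by the vanishing last component of $\mathbf{e}_1^T(T_n)^k$.
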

\begin{proof}
Consider the $n$-dimensional vectors 
$$ \mathbf{u}_k = (T_n)^k \, \mathbf{e}_{\nu(1)}, \quad k=0, 1, \dots . $$
If the last element of $\mathbf{u}_k$ is zero for $k=0, \dots, n-1$, 
then 
$$  \mathbf{e}_1^T \mathbf{u}_k = \mathbf{e}_1^T \, (T_{n-1})^k \, \mathbf{e}_{\nu{(1)}}, \quad k=0, \dots, n-1, $$
proving the lemma.
In the following, when the elements from the position $i$ to the position $j$ of a vector are possibly nonzero,
we denote them by $*_{i:j}$ ($*_i = *_{i:i}$).
Similarly, when the elements from the position $i$ to the position $j$ are null, we denote them by $0_{i:j}$.
Direct computations show that
$$ \mathbf{u}_1 = \left[\begin{matrix}
                      0_{1:\nu(1)-2} \\
                      *_{\nu(1)-1:\nu(1)} \\
                      0_{\nu(1)+1:n}
                  \end{matrix}\right], \,
   \mathbf{u}_2 = \left[\begin{matrix}
                      0_{1:\nu(1)-3} \\
                      *_{\nu(1)-2:\nu(1)} \\
                      0_{\nu(1)+1:n}
                  \end{matrix}\right], \dots, \,
   \mathbf{u}_{\nu(1)-1} = \left[\begin{matrix}
                      *_{1:\nu(1)} \\
                      0_{\nu(1)+1:n} \\
                  \end{matrix}\right].
 $$
 Moreover,
 $$ \mathbf{u}_{\nu(1)} = \left[\begin{matrix}
                      *_{1:\nu(1)} \\
                      0_{\nu(1)+1:\nu(2)-1} \\
                      *_{\nu(2)}\\
                      0_{\nu(2)+1:n}
                  \end{matrix}\right], 
$$ 
and 
$$
   \mathbf{u}_{\nu(1)+1} = \left[\begin{matrix}
                      *_{1:\nu(1)} \\
                      0_{\nu(1)+1:\nu(2)-2} \\
                      *_{\nu(2)-1:\nu(2)} \\
                      0_{\nu(2)+1:n}
                  \end{matrix}\right], \dots,
   \mathbf{u}_{\nu(1)+\nu(2)-1} = \left[\begin{matrix}
                      *_{1:\nu(2)} \\
                      0_{\nu(2)+1:n}
                  \end{matrix}\right].
 $$
 Repeating the argument gives
 $$ \mathbf{u}_{n-1} = \left[\begin{matrix}
                      *_{1:n-1} \\
                      0
                  \end{matrix}\right], $$
 concluding the proof. 
 \qed
\end{proof}

\begin{thm}[{Matching Moment Property}]\label{matching moments main}
  Let $\mathcal{L}$ be a linear functional with complex moments $m_0, m_1, \dots$,
  and let $T_n$ be the associated block tridiagonal matrix \eqref{block:Tn}
  with the corresponding polynomials $p_0(\lambda),\dots,p_n(\lambda)$.
  Denote the indexes of the regular FOPs by $\nu{(0)}=0, \nu(1), \nu(2), \dots$ .
  For every $n \geq \nu(1)$ let $t$ be so that $\nu{(t)} \leq n < \nu(t+1)$, the matrix $T_n$ satisfies
 \begin{equation*}\label{eq:match:moment1}
       \mu \, m_{\nu{(1)}-1} \, \mathbf{e}_1^T (T_n)^k \, \mathbf{e}_{\nu{(1)}} = m_k, \,\,\, k=0,\dots, \nu(t) + \nu(t+1) -2,
 \end{equation*}
 with 
 $\mu = (\beta_1 \cdots \beta_{\nu{({1})}-1})^{-1}$ for $\nu{(1)} > 1$, $\mu = 1$ for $\nu{(1)} = 1$,
 and $\nu(t+1) = +\infty$ when $p_{\nu(t)}$ is the last regular FOP.
\end{thm}

\begin{proof}
Consider the linear functional
$$ \mathcal{L}^{(n)}(f) =  \mu \, m_{\nu{(1)}-1} \, \mathbf{e}_1^T f(T_n) \, \mathbf{e}_{\nu{(1)}}, \quad f(\lambda) \in \mathcal{P}. $$
If the linear functionals $\mathcal{L}$ and $\mathcal{L}^{(n)}$ are identical on the space $\mathcal{P}_{\nu(t) + \nu(t+1) -2}$, then the proof is given.
By Lemma \ref{thm:char:poly} and the Cayley--Hamilton Theorem, the polynomial $p_n(\lambda)$ satisfies the orthogonality conditions 
\begin{equation}\label{eq:cayley}
 \mathcal{L}^{(n)}(\lambda^k p_n) =  \mu \, m_{\nu{(1)}-1} \, \mathbf{e}_1^T (T_n)^k p_n(T_n) \, \mathbf{e}_{\nu{(1)}} = 0, \quad k=0,1,\dots \, .
\end{equation}

Proceeding by induction on $n$, first consider the case $n = \nu{(1)}> 1$. 
Since $T_{\nu(1)}$ is a Hessenberg matrix it satisfies
$$\mathbf{e}_1^T (T_{\nu(1)})^k \, \mathbf{e}_{\nu{(1)}} = 0 = m_k, \quad k=0,\dots,\nu{(1)}-2.$$
Direct computations give
$\mathbf{e}_1^T (T_{\nu(1)})^{\nu(1) - 1} \, \mathbf{e}_{\nu{(1)}} = \beta_1 \cdots \beta_{\nu(1)-1} \neq 0$.
Therefore
\begin{equation}\label{eq:nu1:mmp}
  \mu \, m_{\nu{(1)} - 1} \, \mathbf{e}_1^T (T_{\nu(1)})^{k} \, \mathbf{e}_{\nu{(1)}} = m_{k}, \quad k=0,\dots,\nu{(1)} -1, 
\end{equation}
which also trivially stands for $n = \nu(1) = 1$.
Using property \eqref{eq:cayley} and Theorem \ref{ade of wgq}, 
$p_{\nu(1)}(\lambda)$ determines the quadrature $\mathcal{G}_{\nu(1)}^{(\nu(1))}$ for $\mathcal{L}^{(\nu(1))}$ 
so that $\mathcal{G}_{\nu(1)}^{(\nu(1))}(f) = \mathcal{L}^{(\nu(1))}(f)$ for every $f(\lambda) \in \mathcal{P}$.
Moreover, $p_{\nu(1)}(\lambda)$ determines the Gauss quadrature $\mathcal{G}_{\nu(1)}$ for $\mathcal{L}$,
exact for polynomials of degree at most $\nu(1) + \nu(2) -2$.
The two quadratures $\mathcal{G}_{\nu(1)}^{(\nu(1))}$ and $\mathcal{G}_{\nu(1)}$ coincide since they have the same weights.
Indeed, if $h_{i,j}(\lambda)$ is the interpolatory polynomial \eqref{eq:hij} for $n=\nu(1)$,
then the weights of $\mathcal{G}_{\nu(1)}^{(\nu(1))}$ and $\mathcal{G}_{\nu(1)}$ are respectively given by
$$ {\omega}_{i,j}^{(\nu(1))} = \mathcal{L}^{(\nu(1))}(h_{i,j}) \quad \textrm{ and } \quad \omega_{i,j} = \mathcal{L}(h_{i,j}). $$
Since $h_{i,j}(\lambda)$ has degree $\nu(1)-1$, equality \eqref{eq:nu1:mmp} gives 
$$ {\omega}_{i,j}^{(\nu(1))} = \mathcal{L}^{(\nu(1))}(h_{i,j}) = \mathcal{L}(h_{i,j}) = \omega_{i,j}, $$
proving the theorem for $n=\nu(1)$.

Assume $n>\nu(1)$, with $t$ so that $\nu{(t)} \leq n < \nu(t+1)$,
and define the quadrature ${\mathcal{G}_n^{(n)}}$ for $\mathcal{L}^{(n)}$, determined by the polynomial $p_n(\lambda)$.
By \eqref{eq:cayley} and Theorem \ref{ade of wgq},  ${\mathcal{G}_n^{(n)}}(f) = \mathcal{L}^{(n)}(f)$ for every $f(\lambda) \in \mathcal{P}$.
Furthermore, $p_n(\lambda)$ determines the quadrature  $\mathcal{G}_n = \mathcal{G}_{\nu(t)}$ for $\mathcal{L}$, 
exact for every polynomials of degree at most $\nu(t) + \nu(t+1) - 2$.
As noticed above, $\mathcal{G}_n^{(n)}$ ad $\mathcal{G}_n$ coincide if and only if 
the respective weights $\omega_{i,j}^{(n)}$ and $\omega_{i,j}$ coincide.
Let $h_{i,j}(\lambda)$ be the interpolatory polynomials \eqref{eq:hij}. Since $h_{i,j}(\lambda)$ has degree $n-1$ the weight $\omega_{i,j}^{(n)}$ satisfies
$$ \omega_{i,j}^{(n)}  = \mathcal{L}^{(n)}(h_{i,j}) \, \mathbf{e}_1^T \, h_{i,j}(T_n) \, \mathbf{e}_{\nu{(1)}} 
= \mu \, m_{\nu{(1)} -1} \, \mathbf{e}_1^T \, h_{i,j}(T_{n-1}) \, \mathbf{e}_{\nu{(1)}} = \mathcal{L}(h_{i,j}) = \omega_{i,j},$$
where Lemma \ref{lemma:n-1} and the inductive assumption were used. 
\qed
\end{proof}

 We recall the definition of \emph{matrix function}.
A function  $f(\lambda)$ is defined on the spectrum of the given matrix $A$
when for every eigenvalue $\lambda_i$ of $A$ there exist $f^{(j)}(\lambda_i)$ for $j = 0,1, \dots, s_i - 1$, 
with $s_i$ the order of the largest Jordan block of $A$ in which $\lambda_i$ appears. 
Consider the Jordan block $\Lambda$ of the size $s$ corresponding to the eigenvalue $\lambda$,
then the matrix function $f(\Lambda)$ is defined as
$$   f(\Lambda) =  \left [ \begin{array}{ccccc}
                                      f(\lambda)      & \frac{f'(\lambda)}{1!} & \frac{f^{(2)}(\lambda)}{2!}   & \dots  & \frac{f^{(s -1)}(\lambda)}{(s -1)!}      \\
                                      0                 & f(\lambda)             & \frac{f'(\lambda)}{1!}        & \dots  & \frac{f^{(s -2)}(\lambda)}{(s -2)!}      \\
                                      \vdots            & \ddots                   & \ddots                          & \ddots & \vdots \\
                                      \vdots            &                          & \ddots                          & \ddots & \frac{f'(\lambda)}{1!} \\
                                      0                 & \dots                    & \dots                           & 0      & f(\lambda)
                                     \end{array} \right ].
$$

\noindent  Denoting
$$A = W \mathrm{diag}(\Lambda_1,\dots,\Lambda_\nu) W^{-1},$$
the Jordan decomposition of $A$, the matrix function $f(A)$ is defined as
\begin{equation}\label{eq:mtx:fun}
   f(A) = W \mathrm{diag} (f(\Lambda_1),\dots, f(\Lambda_\nu)) W^{-1}.
\end{equation}
We refer to \cite{HigBook08} for further information and for the equivalence to the other definitions of matrix function. 

Consider the block tridiagonal matrix $T_n$ of Theorem \ref{matching moments main}
and its Jordan decomposition $T_n = W \mathrm{diag} (\Lambda_1,\dots, \Lambda_\ell) W^{-1}$.
Since $T_n$ is nonderogatory, there are $\lambda_1, \dots, \lambda_\ell$ distinct eigenvalues
corresponding to the Jordan blocks $\Lambda_1, \dots, \Lambda_\ell$ of the sizes respectively $s_1, \dots, s_\ell$.
If $f(\lambda)$ is a smooth enough function so that $f(T_n)$ is well defined, 
then the Jordan decomposition of $T_n$ and some algebraic manipulations give
\begin{equation}\label{eq:mtx:form}
   \mu \, m_{\nu{(1)}-1} \, \mathbf{e}_1^T f(T_n) \, \mathbf{e}_{\nu{(1)}} = \sum_{i=1}^\ell \sum_{j=0}^{s_i-1} \omega_{i,j} f^{(j)}(\lambda_i) , 
\end{equation}
  with $\omega_{i,j}$ complex weights, $\mu$ and $m_{\nu{(1)}-1}$ as in Theorem \ref{matching moments main};
  see \cite{kautsky:81} and \cite[Section 3]{pinar:ramirez} for algebraic expressions of the weights.
  This observation together with the proof of Theorem \ref{matching moments main} 
  shows that when $n=\nu(t)$ the bilinear form $\mu \, m_{\nu{(1)}-1} \, \mathbf{e}_1^T f(T_n) \, \mathbf{e}_{\nu{(1)}}$
  is a matrix formulation of the $n$-node Gauss quadrature $\mathcal{G}_{n}(f)$ for the linear functional $\mathcal{L}$.
  Moreover, if $\nu(t) < n < \nu(t+1)$, then Lemma \ref{lemma:Gnreg} gives $\mathcal{G}_n = \mathcal{G}_{\nu(t)}$;
  hence $T_n$ and  $T_{\nu(t)}$ correspond to the same Gauss quadrature $\mathcal{G}_{\nu(t)}$, despite being different.
  
\section{The minimal partial realization and Gauss quadrature}\label{sec:minreal}
Any triplet $({\mathbf w}, A, {\mathbf v})$ composed of a matrix $A$ and vectors $\mathbf{v}, \mathbf{w}$, 
can be associated with a dynamical system
$$\frac{\text{d}{\mathbf z}}{\text{d}t}=A\,{\mathbf z}(t)+{\mathbf v} u(t)$$
$$y(t)={\mathbf w}^*{\mathbf z}(t),$$
with $\mathbf{z}(t)$ the state vector,  $u(t)$ the scalar input (control), and $y(t)$ the scalar output.
The transfer function 
$$\Gamma(\tau):={\mathbf w}^*(\tau I-A)^{-1}\,{\mathbf v}=\sum_{j=0}^{\infty}\frac{{\mathbf w}^* A^j\,{\mathbf v}}{\tau^{j+1}}$$
connects $u(t)$ with $y(t)$ and it is obtained applying the Laplace transform;
refer, e.g., to \cite[Section 2]{HoKal66}, \cite[Section 4]{Par92}, \cite[Section 4.1, 4.2 and 11.1]{AntBook05}. 
The series representation holds only for $|\tau|$ large enough, 
and the coefficients $\{{\mathbf w}^*A^j \, {\mathbf v}\}_{j=0}^{\infty}$ are usually known as \emph{Markov parameters}. 
The triplet $({\mathbf w}, A, {\mathbf v})$ is called a realization of $\Gamma$. 
One of the questions in systems theory is to determine all the realizations $({\mathbf w}, A, {\mathbf v})$ 
that yield a given (rational) function $\Gamma$, or equivalently, its Markov parameters. 
When the realization matches a finite number of Markov parameters it is said to be a \emph{partial realization}.
A partial realization in which $A$ has minimal dimension is called a \emph{minimal partial realization}.
Among the extensive literature about the realization problem we refer the reader to the papers by Kalman \cite{Kal63,Kal79},
Gilbert \cite{Gil63}, Ho and Kalman \cite{HoKal66}, Gragg \cite{Gra74}, Gragg and Lindquist \cite{GraLin83},
Parlett \cite{Par92} (which offers an algebraic point of view), Heinig and Jankowski \cite{HeiJan92}, 
and to the monographs by Kailath \cite{Kai80}, Bultheel and Van Barel \cite[Chapter 6]{BulVBaBook97}, 
Antoulas \cite[Section 4.4]{AntBook05}, and by Liesen and Strako\v s \cite[Section 3.9]{LieStrBook13}; 
see also \cite{Moo81}.
In the papers by Chebyshev from 1855--1859 \cite{Che1855,Che1859a} and Christoffel from 1858 \cite{Chr1858}
the concept equivalent to the  minimal partial realization is present (without using the name)
for a sequence of moments defining a positive definite linear functional;
cf. the comment in \cite[p.~23]{BulVBaBook97}.
The seminal paper by Stieltjes on continued fractions published in 1894 \cite[Sections 7--8, pp.~623--625, and Section 51, pp.~688--690]{Sti1894}
provides an instructive description;
see also \cite[Section 3.9.1]{LieStrBook13} and \cite{PozStr18}.
The results about the Gauss quadrature for real linear functionals 
and about the minimal partial realization of a sequence of real numbers appeared in the same year (1983)
respectively in the monograph by Draux \cite[Chapter 5]{DraBook83} and in the paper by Gragg and Lindquist \cite{GraLin83}.
Section \ref{sec:GQ} has presented the results by Draux extending them to the complex case.
Here the minimal partial realization of a sequence of complex numbers will be described 
together with the relationships between results in \cite{DraBook83} and \cite{GraLin83} 
(with extension to the complex case).

In the following we offer a non-standard formulation of the realization problem in systems theory.

\emph{Problem 1:} For a given finite sequence of complex numbers 
\begin{equation}\label{eq:seq:realiz}
   m_0, m_1, \dots, m_k,
\end{equation}
find all the triplets $({\mathbf w}, A, {\mathbf v})$ 
such that 
$$ {\mathbf w}^* A^j\,{\mathbf v} = m_j, \quad j=0,\dots,k. $$
Notice that usually the Markov parameters are defined as $\eta_j = m_{j-1}$. 

There always exists a solution of dimension $k+1$ of Problem 1.
For instance, take  $A\in \mathbb{C}^{k+1\times k+1}$ and $\mathbf{v}, \mathbf{w}\in \mathbb{C}^{k+1}$ as
\begin{equation}\label{eq:part:real}
  A = \left [\,\begin{matrix}
        \,\,  0 \,\, & \,\, 1 \,\,  &   &   \\
                     & 0            & \,\, \ddots \,\, &  \\
                     &              & \ddots           & \,\, 1 \,\, \\
                     &              &                  & 0
       \end{matrix} \, \right],
	\quad
	\mathbf{v} = \left [\,\begin{matrix}
	  m_0 \\  m_1   \\ \vdots  \\ m_{k}
       \end{matrix} \, \right],
	\quad
	\mathbf{w} = \left [\,\begin{matrix}
	  1 \\  0   \\ \vdots  \\ 0
       \end{matrix} \, \right].
\end{equation}
The sequence \eqref{eq:seq:realiz} defines the linear functional $\mathcal{L}$ on $\mathcal{P}_k$ 
with moments 
\begin{equation}\label{eq:linfun:realiz}
   \mathcal{L}(\lambda^j) = m_j, \quad j=0,\dots,k.
\end{equation}
For any solution $({\mathbf w}, A, {\mathbf v})$ of dimension $n$,
let $\lambda_1, \dots, \lambda_\ell$ be the distinct eigenvalues of $A$
and $s_i$ be the maximal geometric multiplicity of $\lambda_i$ (the size of the largest Jordan block corresponding to $\lambda_i$).
Then the definition of matrix function \eqref{eq:mtx:fun} and algebraic manipulations give
$$ {\mathbf w}^*f(A)\,{\mathbf v} = \sum_{i=1}^\ell \sum_{s=0}^{s_i-1} \omega_{i,s} f^{(s)}(\lambda_i), \quad s_1 + \dots + s_\ell \leq n,  $$
with $\omega_{i,s}$ complex weights. 
Therefore every realization of the sequence \eqref{eq:seq:realiz} defines a quadrature rule for the linear functional \eqref{eq:linfun:realiz}.

\emph{Problem 2:} Among all the realizations for \eqref{eq:seq:realiz} find those of smallest dimension.

Let $n$ be the smallest index so that
the unique $n$-node Gauss quadrature determined by the regular FOP $p_n(\lambda)$
is exact for every polynomial of degree smaller than or equal to $k$, i.e.,
$$ \mathcal{L}(q) = \mathcal{G}_n(q) =  \sum_{i=1}^\ell \sum_{s=0}^{s_i-1} \omega_{i,s} q^{(s)}(\lambda_i), \quad s_1 + \dots + s_\ell = n, \quad q \in \mathcal{P}_k. $$
If $T_n$ is the block tridiagonal matrix \eqref{block:Tn} corresponding to $p_n(\lambda)$,
then Theorem \ref{matching moments main} shows that the triplet 
$(\mathbf{e}_1, T_n, \mu \, m_{\nu{(1)}-1} \mathbf{e}_{\nu{(1)}})$
is a minimal partial realization for \eqref{eq:seq:realiz}.
All the other minimal partial realizations can be expressed as
\begin{equation}\label{eq:min:part:real}
    \left(B^*\mathbf{e}_1, \, B^{-1}T_n B, \, \mu \, m_{\nu{(1)}-1} B^{-1} \mathbf{e}_{\nu{(1)}}\right),
\end{equation}
with $B$ any $n\times n$ invertible matrix
(notice that this is a straightforward extension of the result given in \cite[Theorem 5]{GraLin83} to complex Markov parameters).
Hence any minimal partial realization of a sequence of complex number $m_0, m_1, \dots$ 
corresponds to a Gauss quadrature for the linear functional having $m_0, m_1, \dots$ as moments.

Finally, we recall the following well-known spectral result about minimal realizations, giving a proof based on the previous developments.
\begin{thm}\label{thm:spect:minreal}
Consider the matrix $A$ and the vectors $\mathbf{v}, \mathbf{w}$. 
   If the triplet $(\mathbf{c}, S, \mathbf{b})$ is a minimal realization of the sequence of Markov parameters given by
$$m_j = \mathbf{w}^* A^j \mathbf{v}, \quad j =0, 1, \dots ,$$
then the spectrum of $S$ is a subset of the spectrum of $A$.
\end{thm}
\begin{proof}
 Let $p_k(\lambda)$ be the characteristic polynomial of the matrix $A$ and consider the linear functional $\mathcal{L}$ defined by 
 $$ \mathcal{L}(q) = \mathbf{w}^* q(A) \, \mathbf{v}, \quad q(\lambda) \in \mathcal{P}. $$
 By Lemma \ref{thm:char:poly} and the Cayley--Hamilton Theorem 
 the $k$-degree polynomial $p_k(\lambda)$ is formally orthogonal to every polynomial, i.e., 
 $\mathcal{L}(p_k q) = 0$ for every  $q(\lambda) \in \mathcal{P}$.
 Consider the last regular FOP $p_n(\lambda)$ in the sequence of the FOPs with respect to $\mathcal{L}$.
 The polynomial $p_k(\lambda)$ is $(k-n)$-quasi-orthogonal (note that $n \leq k$). 
 Hence the roots of $p_n(\lambda)$ are roots of $p_k(\lambda)$ by Theorem \ref{theorem rr for mqop}.
 As discussed above, every minimal realization can be expressed as
 $\left(B^*\mathbf{e}_1, \, B^{-1}T_n B, \, \mu \, m_{\nu{(1)}-1} B^{-1} \mathbf{e}_{\nu{(1)}}\right)$,
 with $T_n$ the block tridiagonal matrix \eqref{block:Tn} corresponding to $p_n(\lambda)$
 and $B$ an invertible matrix.
 Thus Lemma \ref{thm:char:poly} concludes the proof.
 \qed
\end{proof}

\noindent We remark that the previous theorem is a consequence of the Canonical Structure Theorem of the linear system theory;
see, e.g., \cite{Kal62}, \cite[Theorem 5]{Kal63}, \cite{Gil63} and the description in \cite[Section 7]{Par92}.

\section{The look-ahead Lanczos algorithm and Gauss quadrature}\label{sec:lanczos}
   Consider a complex matrix $A$  and a complex vector $\mathbf{v}$ of the corresponding dimension.
   The $n$th \emph{Krylov subspace} generated by $A$ and $\mathbf{v}$ 
   is the subspace 
   $$ \mathcal{K}_n(A,\mathbf{v}) =   
	    \textrm{span}\{\mathbf{v},A\, \mathbf{v},\dots,A^{n-1} \,\mathbf{v}\},$$
   which can be equivalently expressed as
   $$ \mathcal{K}_n(A,\mathbf{v}) = \{ p(A)\, \mathbf{v} : p(\lambda) \in \mathcal{P}_{n-1}\}. $$ 
   The basic facts about Krylov subspaces had been given by Gantmacher in \cite{Gan34};
   other results can be found, e.g., in \cite[Section 2.2]{LieStrBook13}.
   
   Let $A$ be a complex matrix, $\mathbf{v}, \mathbf{w}$ be complex vectors,
   and $\mathcal{L}: \mathcal{P} \rightarrow \mathbb{C}$ be the linear functional defined by
   \begin{equation}\label{funct nonH}
      \mathcal{L}(p) = \mathbf{w}^* p(A) \, \mathbf{v}, \quad p(\lambda) \in \mathcal{P}.
   \end{equation}  
  Denoting with $\bar p(\lambda)$ the polynomial whose coefficients are the conjugates of the coefficients of $p(\lambda)$
  and noticing that  
   $$ p(A)^* = \bar p(A^*),$$
   for $p(\lambda), q(\lambda) \in \mathcal{P}_{n-1}$, give
   $$ \mathcal{L}(qp)  = \mathbf{w}^*q(A)p(A)\,\mathbf{v} =  \mathbf{\widehat w}^* \mathbf{\widehat v},$$
   with $\mathbf{\widehat v} = p(A) \, \mathbf{v} \in \mathcal{K}_n(A,\mathbf{v})$
   and $\mathbf{\widehat w} = \bar q(A^*) \, \mathbf{w} \in \mathcal{K}_n(A^*,\mathbf{w})$.
   
   The non-Hermitian Lanczos algorithm (formulated by Lanczos in \cite{Lan50} and \cite{Lan52})
   gives, when possible, the vectors 
   $$ \mathbf{v}_0, \dots, \mathbf{v}_{n-1} \quad \textrm{ and } \quad \mathbf{w}_0, \dots, \mathbf{w}_{n-1}, $$
   which are respectively basis of $\mathcal{K}_n(A,\mathbf{v})$ and $\mathcal{K}_n(A^*,\mathbf{w})$ 
   satisfying the biorthogonality conditions
   \begin{equation}\label{eq:orth:cond:lanczos}
    \mathbf{w}_i^*\mathbf{v}_j = 0, \quad i \neq j, \quad \textrm{ and } \quad 
	\mathbf{w}_i^*\mathbf{v}_i \neq 0, \quad i,j=0,\dots,n-1. 
   \end{equation}
   In this case, there exist regular FOPs $p_0(\lambda), \dots, p_{n-1}(\lambda)$
   with respect to the linear functional \eqref{funct nonH} so that
   $$ \mathbf{v}_j = p_j(A)\,\mathbf{v} \quad \textrm{ and } \quad \mathbf{w}_j = \bar p_j(A^*)\,\mathbf{w}, \quad j=0,\dots,n-1.$$
   Hence bases satisfying \eqref{eq:orth:cond:lanczos} exist if and only if $\mathcal{L}$ is quasi-definite on $\mathcal{P}_{n-1}$;
   see, e.g., \cite[Theorem 2.1]{PozPraStr18}.
   
   In the non-Hermitian Lanczos algorithm, the vectors $\mathbf{v}_j, \mathbf{w}_j$, $j=0,\dots,n-1$,
   are obtained by the three-term recurrences satisfied by the regular FOPs $p_0, \dots, p_{n-1}$;
   for details refer to \cite[Section 2.7.2]{BreBook80}, \cite{Gut92,Gut94b,Gut94}, 
   \cite[Chapter 7]{SaaBook03}, \cite[Chapter 4]{GolMeuBook10}, \cite[Section 2.4]{LieStrBook13},
   also refer to the survey \cite{PozPraStr18} 
   where the connection with the Gauss quadrature for quasi-definite linear functionals is described.     
   Considering biorthonormal vectors, i.e., $\mathbf{w}_i^*\mathbf{v}_i = 1$,
   the non-Hermitian Lanczos algorithm corresponds to the three-term recurrences \eqref{eq:3term:ast1}
   and can be given as Algorithm \ref{algo non-H Lanczos};
   see, e.g., \cite{Cull86,CULLUM198919}.
   The outputs of the first $n-1$ iterations of Algorithm \ref{algo non-H Lanczos}
   define the matrices
   $$V_n = [\mathbf{v}_0, \dots, \mathbf{v}_{n-1}] \quad \text{ and } \quad W_n = [\mathbf{w}_0, \dots, \mathbf{w}_{n-1}]$$
   which satisfy $W_n^{*} V_n = I$, with $I$ the identity matrix of dimension $n$.
   Moreover, 
  \begin{align*}
   A V_n = V_n J_n + \widehat{\mathbf{v}}_n \mathbf{e}_n^T, \\
   A^* W_n = W_n \bar{J}_n + \widehat{\mathbf{w}}_n \mathbf{e}_n^T,
  \end{align*}
   with $J_n$ the complex Jacobi matrix \eqref{eq:jacobi} associated with the linear functional \eqref{funct nonH},
   and $\bar{J}_n$ the Jacobi matrix with conjugate elements ($\widehat{\mathbf{v}}_n$ and $\widehat{\mathbf{w}}_n$ are defined in Algorithm \ref{algo non-H Lanczos}).
Therefore the non-Hermitian Lanczos algorithm can be seen as a way to compute $J_n$ and hence the Gauss quadrature for the functional \eqref{funct nonH}; 
see \cite[Theorem 2]{FreHoc93} and also \cite{PozPraStr18}
(for the block Lanczos algorithm see, e.g., \cite[Section 3]{fenu:reichel:2013}).

\begin{table}
     \noindent\fbox{
\parbox{0.95\textwidth}{\begin{center}
\parbox{0.9\textwidth}{
\begin{algorithm}[non-Hermitian Lanczos algorithm]\label{algo non-H Lanczos} $ $

 \noindent Input: a complex matrix $A$, and
                  complex vectors $\mathbf{v},\mathbf{w}$ such that $\mathbf{w}^*\mathbf{v} \neq 0$.

 \noindent Output: vectors $\mathbf{v}_0,\dots,\mathbf{v}_{n-1}$ and vectors $\mathbf{w}_0,\dots,\mathbf{w}_{n-1}$
  spanning respectively $\mathcal{K}_n(A,\mathbf{v})$, $\mathcal{K}_n(A^*,\mathbf{w})$
  and satisfying the biorthogonality conditions \eqref{eq:orth:cond:lanczos} with $\mathbf{w}_i^*\mathbf{v}_i = 1$, $i=0,\dots, n-1$.
  \begin{align*}
  & \textrm{ Initialize: } \mathbf{v}_{-1}=\mathbf{w}_{-1}=0, \, \beta_0 =\sqrt{\mathbf{w}^*\mathbf{v}}, \, \mathbf{v}_0 = \mathbf{v}/\beta_0, \, \mathbf{w}_0 = \mathbf{w}/ \bar{\beta}_0. \\
  & \textrm{ For } n=1,2,\dots \\ 
  &    \qquad \quad \alpha_{n-1} = \mathbf{w}_{n-1}^*A\mathbf{v}_{n-1}, \\
  &    \qquad \quad \mathbf{\widehat v}_{n} = A \mathbf{v}_{n-1} - \alpha_{n-1}\mathbf{v}_{n-1} - \beta_{n-1} \mathbf{v}_{n-2}, \\
  &    \qquad \quad \mathbf{\widehat w}_{n} = A^*\mathbf{w}_{n-1} - \bar{\alpha}_{n-1}\mathbf{w}_{n-1} - \bar \beta_{n-1} \mathbf{w}_{n-2}, \\
  &     \qquad \quad \beta_n = \sqrt{\mathbf{\widehat w}_{n}^*\mathbf{\widehat v}_n}, \\
  &     \qquad \quad \textrm{if } \beta_n = 0 \textrm{ then stop}, \\
  &     \qquad \quad  \mathbf{v}_n = \mathbf{\widehat v}_n / \beta_n, \\
  &     \qquad \quad  \mathbf{w}_n = \mathbf{\widehat w}_n / \bar \beta_n, \\
  & end.
  \end{align*}
\end{algorithm}
  }\end{center}}}
  \end{table}

   If the $n$th iteration of Algorithm \ref{algo non-H Lanczos} gives $\beta_n = 0$, then the algorithm has a \emph{breakdown}.
   Since $\beta_n = \mathcal{L}(\lambda  p_{n-1} p_{n})$,
   a breakdown arises if and only if $\mathcal{L}$ is not quasi-definite on $\mathcal{P}_n$. 
   In this case, the FOP $p_{n}(\lambda)$ is orthogonal to itself.
   Therefore there do not exist biorthonormal bases of the Krylov subspaces 
   $\mathcal{K}_{n+1}(A,\mathbf{v})$ and $\mathcal{K}_{n+1}(A^*,\mathbf{w})$.
   Moreover, there does not exist a regular FOP $p_{n+1}(\lambda)$.
  There are two kinds of breakdown for Algorithm \ref{algo non-H Lanczos}:
  \begin{enumerate}
  \item \emph{lucky breakdown}  (or \emph{benign breakdown}), when $\widehat{\mathbf{v}}_n = 0$ or $\widehat{\mathbf{w}}_n = 0$;
  \item \emph{serious breakdown}, when $\widehat{\mathbf{v}}_n\neq \mathbf{0}$ and
	  $\widehat{\mathbf{w}}_n\neq\mathbf{0}$, 
	  but $\mathbf{\widehat w}_{n}^*\mathbf{\widehat v}_n=0$.
  \end{enumerate}
In the first case either $\mathcal{K}_{n}(A,\mathbf{v})$ is $A$-invariant or $\mathcal{K}_{n}(A^*,\mathbf{w})$ is $A^*$-invariant.
Then the algorithm is usually stopped since an invariant subspace is often a desirable result; 
see, e.g., \cite{BreRedSad91}, \cite[Section 5]{Par92} and \cite[Section 10.5.5]{GolVLoBook13}.
The second case is problematic. 
In \cite[pp.~389--391]{WilBook65} 
Wilkinson showed with some examples that well-conditioned matrices with well-conditioned eigenvectors can produce a breakdown.
Hence as Wilkinson wrote, serious breakdown ``is not associated with any shortcoming in the matrix $A$. 
It can happen even when the eigenproblem of $A$ is very well-conditioned. 
We are forced to regard it as a specific weakness of the Lanczos method itself.''
The interested reader can also refer to 
\cite{Rut53}, \cite[p.~34]{HouBau59}, \cite[Chapter IV]{Tay82}, \cite{ParTayLiu85},
  \cite[Section 7]{Par92}, and \cite{Gut92,Gut94b,Gut94}.

Taylor in \cite{Tay82} and Parlett, Taylor, and Liu in \cite{ParTayLiu85}
first proposed the \emph{look-ahead Lanczos algorithm},
a strategy able to deal with the breakdown problem.
When $\mathbf{\widehat w}_{n}^*\mathbf{\widehat v}_n=0$, 
the idea behind their strategy is to look for a vector $\mathbf{\widetilde w}_{k} \in \mathcal{K}_{k+1}(A^*, \mathbf{w})$,
with $k>n$ big enough, so that $\mathbf{\widetilde w}_{k}^*\mathbf{\widehat v}_n\neq0$ 
and $\mathbf{\widetilde w}_{k}^*\mathbf{v}_j = 0$ for $j=0,\dots,n-1$.
In \cite{FreGutNac93} Freund, Gutknecht, and Nachtigal implemented a different look-ahead strategy
considering sequences of FOPs and quasi-orthogonal polynomials.
Their procedure is based on the work of Gutknecht published in \cite{Gut92} and later in \cite{Gut94b};
see also the thesis \cite{Nac91} by Nachtigal and the description in \cite{Fre93b} by Freund.
We also refer the reader to the strategy in \cite{BreRedSad91,Brezinski1992} and the related work \cite{Draux96}. 
The following part will describe the basic ideas behind the look-ahead Lanczos algorithm by Freund, Gutknecht, and Nachtigal.

Consider the linear functional \eqref{funct nonH} and let $p_0(\lambda)\neq 0,p_1(\lambda),\dots$ 
be the sequence \eqref{eq:seq:p} of polynomials so that 
$p_{\nu(0)}(\lambda) = p_0(\lambda), p_{\nu(1)}(\lambda),  \dots $ are the regular FOPs 
and $p_n$ is an $(n-\nu(t))$-quasi-orthogonal polynomial for $\nu(t) < n < \nu(t+1)$,
with $\nu(t+1) = \infty$ when $p_{\nu(t)}$ is the last of the regular FOPs.
Moreover, consider the vectors
\begin{equation*}
 \mathbf{v}_n = p_n(A)\,\mathbf{v} \quad \textrm{ and } \quad \mathbf{w}_n = \bar p_n(A^*)\,\mathbf{w}, \quad n=0, 1, \dots ,
\end{equation*}
and the matrices $V^{(t)}_n = [\mathbf{v}_{\nu(t)}, \dots, \mathbf{v}_{n-1}]$,
$W^{(t)}_n =[\mathbf{w}_{\nu(t)}, \dots, \mathbf{w}_{n-1}]$,
with $V^{(t)} = V^{(t)}_{\nu(t+1)}$ and $W^{(t)} = W^{(t)}_{\nu(t+1)}$
for simplicity of notation.
Hence for $\nu(t) < n \leq \nu(t+1)$, the columns of $V_n = [V^{(0)}, \dots, V^{(t)}_n]$ and of $W_n = [W^{(0)}, \dots, W^{(t)}_n]$
are respectively basis of $\mathcal{K}_n(A, \mathbf{v})$ and $\mathcal{K}_n(A^*, \mathbf{w})$.
However, instead of the biorthogonality conditions \eqref{eq:orth:cond:lanczos}, the following block-biorthogonality conditions hold
\begin{equation}\label{eq:block:biorth}
   W^{(t)*}_n V^{(k)}_\ell = 0, \, \textrm{ for } t \neq k, 
\quad \textrm{ and } \quad 
   W^{(t)*}_n V^{(t)}_n = \Omega^{(t)}_n,
\end{equation}
 with
 $$
 \Omega^{(t)}_n = \left[ \begin{matrix}
                        \mathcal{L}(p_{\nu(t)}, p_{\nu(t)}) & \dots & \mathcal{L}(p_{\nu(t)}, p_{n-1}) \\
                        \vdots &   & \vdots \\
                        \mathcal{L}(p_{n-1}, p_{\nu(t)}) 	  & \dots & \mathcal{L}(p_{n-1}, p_{n-1}) \\
                     \end{matrix} \right];
 $$
 we denote $\Omega^{(t)}_{\nu(t+1)}$ by $\Omega^{(t)}$. 
 
 By Theorem \ref{theorem rr for mqop} and the recurrences \eqref{eq:gen:rec:qorth}
 if $\nu(t) < n < \nu(t+1)$, 
 then for some complex coefficients $\mathbf{a}_n = [\alpha_{n,\nu(t)}, \dots, \alpha_{n,n-1}]$ and $\beta_n \neq 0$ 
 the following recurrences hold
 $$ \beta_n\mathbf{v}_n = A \mathbf{v}_{n-1} - \sum_{j=\nu(t)}^{n-1} \alpha_{n,j} \mathbf{v}_j,
    \quad \textrm{ and } \quad
    \bar \beta_n\mathbf{w}_n = A^* \mathbf{w}_{n-1} - \sum_{j=\nu(t)}^{n-1} \bar \alpha_{n,j} \mathbf{w}_j.$$
If $n = \nu(t+1)$ with $t \geq 0$, then for some $\beta_n \neq 0$ the recurrences \eqref{long rr by Freund} give
\begin{align*}
 \beta_n \mathbf{v}_n &= A\mathbf{v}_{n-1}           - \sum_{j=\nu(t)}^{n-1}\alpha_{n,j} \mathbf{v}_j - \gamma_{n}\mathbf{v}_{\nu(t-1)} \\
   \bar \beta_n \mathbf{w}_n &= A^*\mathbf{w}_{n-1}  - \sum_{j=\nu(t)}^{n-1} \bar \alpha_{n,j} \mathbf{w}_j - \bar \gamma_{n}\mathbf{w}_{\nu(t-1)},   
\end{align*}
where $\nu(-1)=-1$, $\mathbf{v}_{-1} = \mathbf{w}_{-1} = 0$, $\gamma_{\nu(1)} = 0$,
$$ \gamma_n = \frac{ \mathbf{w}_{\nu(t)-1}^* A \, \mathbf{v}_{n-1}}{\mathbf{w}_{\nu(t)-1}^* \mathbf{v}_{\nu(t-1)}}, \quad t \geq 1, $$
and the coefficients $\mathbf{a}_n = [\alpha_{n,\nu(t)}, \dots, \alpha_{n,n-1}]$ are given as the solution of the system 
$$ \Omega^{(t)} \, \mathbf{a}_n = W^{(t)*} A \mathbf{v}_{n-1};  $$
see the linear system \eqref{system for alpha}.
The described recurrences can be expressed in the matrix form
$$ A V_n = V_n T_n^T + \beta_{n+1}\mathbf{v}_{n}\mathbf{e}_{n}^T
\quad \textrm{ and } \quad
  A^* W_n = W_n T_n^* + \bar \beta_{n+1} \mathbf{w}_{n}\mathbf{e}_{n}^T, $$
with $T_n^T$ the transpose of the block tridiagonal matrix $T_n$ defined in \eqref{block:Tn}
and $T_n^*$ the conjugate transpose of $T_n$.
The resulting form of the look-ahead Lanczos algorithm is given as Algorithm \ref{algo:look:Lanczos}
and corresponds to the algorithm proposed in \cite[Algorithm 3.1]{FreGutNac93}; see also \cite[Algorithm 5.1]{Fre93b}.

\begin{table}[th!]
     \noindent\fbox{
\parbox{0.95\textwidth}{\begin{center}
\parbox{0.9\textwidth}{
\begin{algorithm}[look-ahead Lanczos algorithm]\label{algo:look:Lanczos} $ $

 \noindent Input: a complex matrix $A$ and complex vectors $\mathbf{v},\mathbf{w} \neq 0$.

 \noindent Output: vectors $\mathbf{v}_0,\dots,\mathbf{v}_{n-1}$ and vectors $\mathbf{w}_0,\dots,\mathbf{w}_{n-1}$
  spanning respectively $\mathcal{K}_n(A,\mathbf{v})$, $\mathcal{K}_n(A^*,\mathbf{w})$ 
  and satisfying the block biorthogonality conditions \eqref{eq:block:biorth}.
  \begin{align*}
  & \textrm{ Initialize: } \nu(-1)=-1, \, \mathbf{v}_{-1}=\mathbf{w}_{-1}=0, \, t = \nu(0)=0,  \, \eta_0 = 1, \textrm{ fix } \beta_0 \neq 0, \\
  & \qquad \qquad \quad    \mathbf{v}_0 = \mathbf{v}/\beta_0, \, \mathbf{w}_0 = \mathbf{w}/ \bar \beta_0, \, V_1^{(0)} = [\mathbf{v}_0], \, W_1^{(0)} = [\mathbf{w}_0]. \\ 
  & \textrm{ For } n=1,2,\dots, \\
  & \qquad \Omega_n^{(t)} = W^{(t)*}_n V^{(t)}_n, \\
  & \qquad \textrm{If $\Omega_{n}^{(t)}$ is regular, then } \\
  &    \qquad\qquad \quad \gamma_n = {(\mathbf{w}_{\nu(t)-1}^* A \mathbf{v}_{n-1})}/ \eta_t, \\
  &    \qquad\qquad \quad \mathbf{a}_n = \left(\Omega_{n}^{(t)}\right)^{-1} W_{n}^{(t)} A \mathbf{v}_{n-1}, \\
  &    \qquad\qquad \quad \mathbf{\widehat v}_{n} = A \mathbf{v}_{n-1} - V_{n}^{(t)} \mathbf{a}_n - \gamma_{n} \mathbf{v}_{\nu(t-1)}, \\
  &    \qquad\qquad \quad \mathbf{\widehat w}_{n} = A^*\mathbf{w}_{n-1} - W_{n}^{(t)} \bar{\mathbf{a}}_n - \bar \gamma_{n} \mathbf{w}_{\nu(t-1)}, \\
  &    \qquad\qquad \quad \textrm{fix } \beta_n \neq 0, \\
  &    \qquad\qquad \quad  \mathbf{v}_n = \mathbf{\widehat v}_n / \beta_n,\,  \mathbf{w}_n = \mathbf{\widehat w}_n / \bar \beta_n, \\
  &    \qquad\qquad \quad \eta_{t+1} = (\mathbf{w}_{n - 1}^* \mathbf{v}_{\nu(t)}), \\
  &    \qquad\qquad \quad  t = t + 1, \, \nu(t) = n, \, V_{n+1}^{(t)} = [\mathbf{v}_n] , \, W_{n+1}^{(t)} = [\mathbf{w}_n], \\
  & \qquad \textrm{else if $\Omega_{n}^{(t)}$ is singular, then } \\
  &    \qquad\qquad \quad \textrm{fix the vector } \mathbf{a}_n, \\
  &    \qquad\qquad \quad \mathbf{\widehat v}_{n} = A \mathbf{v}_{n-1} - V_{n}^{(t)} \mathbf{a}_n, \\
  &    \qquad\qquad \quad \mathbf{\widehat w}_{n} = A^*\mathbf{w}_{n-1} - W_{n}^{(t)} \bar{\mathbf{a}}_n, \\
  &    \qquad\qquad \quad \textrm{fix } \beta_n \neq 0, \\
  &    \qquad\qquad \quad  \mathbf{v}_n = \mathbf{\widehat v}_n / \beta_n, \, \mathbf{w}_n = \mathbf{\widehat w}_n / \bar \beta_n, \\
  &    \qquad\qquad \quad V_{n+1}^{(t)} = [V_{n}^{(t)}, \mathbf{v}_n], \, W_{n+1}^{(t)} = [W_{n}^{(t)}, \mathbf{w}_n], \\
  & \qquad\textrm{end if} \\
  & \qquad \textrm{If }\, \mathbf{v}_n = 0 \textrm{ or }\, \mathbf{w}_n = 0 \textrm{ then } \\
  &    \qquad\qquad \quad \textrm{stop}, \\
  & \qquad\textrm{end if} \\
  & \textrm{end for}.
  \end{align*}
\end{algorithm}
  }\end{center}}}
  \end{table}

  The first $n$ iterations of Algorithm \ref{algo:look:Lanczos}
  produce the coefficients of the block tridiagonal matrix $T_n$. 
  If $\nu(t) \leq n < \nu(t+1)$,
  then the Gauss quadrature $\mathcal{G}_{\nu(t)}$ for the linear functional \eqref{funct nonH}
  has the matrix formulation \eqref{eq:mtx:form} which is determined by the matrix $T_n$.
  Hence Algorithm \ref{algo:look:Lanczos} produces Gauss quadratures for the linear functional \eqref{funct nonH}.
  Notice that by Lemma \ref{lemma:Gnreg}  
  the matrix $T_n$ corresponds to the Gauss quadrature $\mathcal{G}_{\nu(t)}$ for $n = \nu(t), \dots, \nu(t+1)-1$.
  Nevertheless, the iterations $\nu(t)+1, \dots, \nu(t+1)$ of Algorithm \ref{algo:look:Lanczos} are informative
  since they show that $\mathcal{G}_{\nu(t)}$ has degree of exactness larger than $2\nu(t)-1$.
  At the same time, Algorithm \ref{algo:look:Lanczos} also produces the triplet $(\mathbf{e}_1, T_{\nu(t)}, \mu \, m_{\nu{(1)}-1} \mathbf{e}_{\nu{(1)}})$, 
  i.e., the minimal partial realization \eqref{eq:min:part:real} (with $B = I$) of the sequence of Markov parameters defined by
  \begin{equation}\label{eq:markov:par}
     m_j = \mathbf{w}^* A^j \, \mathbf{v}, \quad \textrm{ for } j=0,1,\dots \,.
  \end{equation}

  Consider the case in which a benign breakdown does not arise
  and the determinants of the Hankel submatrices \eqref{eq:hankel:subm} composed of the moments 
  of the linear functional \eqref{funct nonH} are such that
\begin{equation}\label{incurable breakdown}
    \Delta_{n-1}\neq 0,\quad \Delta_{n+k} = 0, \quad \textrm{ for } k = 0, 1, \dots,
\end{equation}
known as \emph{incurable breakdown};
see \cite[page 56]{Tay82}, \cite[Section 7]{ParTayLiu85}, \cite[p.~577]{Par92}.
Then $p_n(\lambda)$ is the last of the regular FOPs ($n = \nu(t)$).
By Theorem \ref{ade of wgq},  
the quadrature $\mathcal{G}_n$ determined by $p_n(\lambda)$ is the Gauss quadrature with maximal number of nodes (counting the multiplicities)
and it is exact for every polynomial.
Equivalently, let $T_n$ be the block tridiagonal matrix obtained at the $n$th step of the Lanczos algorithm. 
Theorem \ref{matching moments main} gives
$$  \mathcal{L}(p) = \mathbf{w}^* p(A)\, \mathbf{v} = \mathcal{G}_n(p) = \mu \, m_{\nu{(1)}-1} \, \mathbf{e}_1^T p(T_n) \, \mathbf{e}_{\nu{(1)}}, \quad p(\lambda) \in \mathcal{P}. $$
Moreover, if $f(\lambda)$ is a function so that $f(A)$ and $f(T_n)$ are well defined matrix functions,
then there exists a polynomial $q(\lambda)$ interpolating in the Hermite sense $f(\lambda)$ at the spectra of $A$ and $T_n$ (note that $q(\lambda)$ depends on $f(\lambda)$, $A$, and $T_m$);
see, e.g., \cite[Section 1.2]{HigBook08}. 
Therefore 
$$\mathcal{L}(f) = \mathbf{w}^* q(A) \, \mathbf{v} = \mu \, m_{\nu{(1)}-1} \, \mathbf{e}_1^T q(T_n) \, \mathbf{e}_{\nu{(1)}} = \mathcal{G}_n(f).$$
Looking at the Lanczos algorithm as a method for getting the Gauss quadrature for a linear functional \eqref{funct nonH},
the incurable breakdown corresponds to the solution of the problem as well as the lucky breakdown.
Furthermore, the triplet $(\mathbf{e}_1, T_n, \mu \, m_{\nu{(1)}-1} \mathbf{e}_{\nu{(1)}})$
is a minimal realization of the transfer function associated with $({\mathbf w}, A, {\mathbf v})$,
i.e., it matches the Markov parameters \eqref{eq:markov:par}.
The previous considerations together with Theorem \ref{thm:spect:minreal} give a new proof for the Mismatch Theorem
based on the properties of the Gauss quadrature for linear functionals. 
The Mismatch Theorem was first proved in \cite[Theorem 4.2]{Tay82} by Taylor; see also \cite[p.~117]{ParTayLiu85}, 
and \cite[Section 7]{Par92} where the theorem was connected with the minimal realization problem.
\begin{thm}[Mismatch Theorem]\label{thm:taylor}
   Le $T_n$ be the block tridiagonal matrix obtained at the $n$th step of Algorithm \ref{algo:look:Lanczos}
   with $A$ as the input matrix and $\mathbf{w},\mathbf{v}\neq 0$ as the input vectors. 
   If the algorithm has an incurable breakdown at the $n$th step, i.e., 
   the Hankel determinants corresponding to the linear functional \eqref{funct nonH}  satisfy \eqref{incurable breakdown},
   then each eigenvalue of $T_n$ (known as Ritz value) is an eigenvalue of $A$.
\end{thm}

\noindent Notice that the look-ahead Lanczos algorithm in \cite{Tay82}
 produces a block tridiagonal matrix different from the matrix $T_n$ \eqref{block:Tn}.
 However, both the matrices are minimal realization of the same sequence of numbers and therefore they are similar.

\section{Conclusion}\label{sec:conc}
The $n$-node Gauss quadrature $\mathcal{G}_n$ for a linear functional $\mathcal{L}$ described in Section \ref{sec:GQ} 
is a straightforward extension of the quadrature introduced for real-valued linear functionals in \cite[Chapter 5]{DraBook83} to the complex case 
and it satisfies the following properties:
\begin{enumerate}
  \item the Gauss quadrature $\mathcal{G}_n$ has degree of exactness \emph{at least} $2n - 1$;
  \item the Gauss quadrature $\mathcal{G}_n$ exists and is unique if and only if the Hankel submatrix of moments $H_{n-1}$ is nonsingular, i.e., $\Delta_{n-1}\neq 0$;
  \item by Theorem \ref{matching moments main} the Gauss quadrature can be written in the matrix form 
	    $ \mathcal{G}_n(f) = \mu \, m_{\nu{(1)}-1} \, \mathbf{e}_1^T f(T_n) \, \mathbf{e}_{\nu{(1)}}$.
 \end{enumerate}
Note that such properties are weaker forms of the properties G1--G3 in Section \ref{sec:qdef}.

  \begin{figure}[tb]
\centering
\includegraphics[width = 0.9\textwidth]{./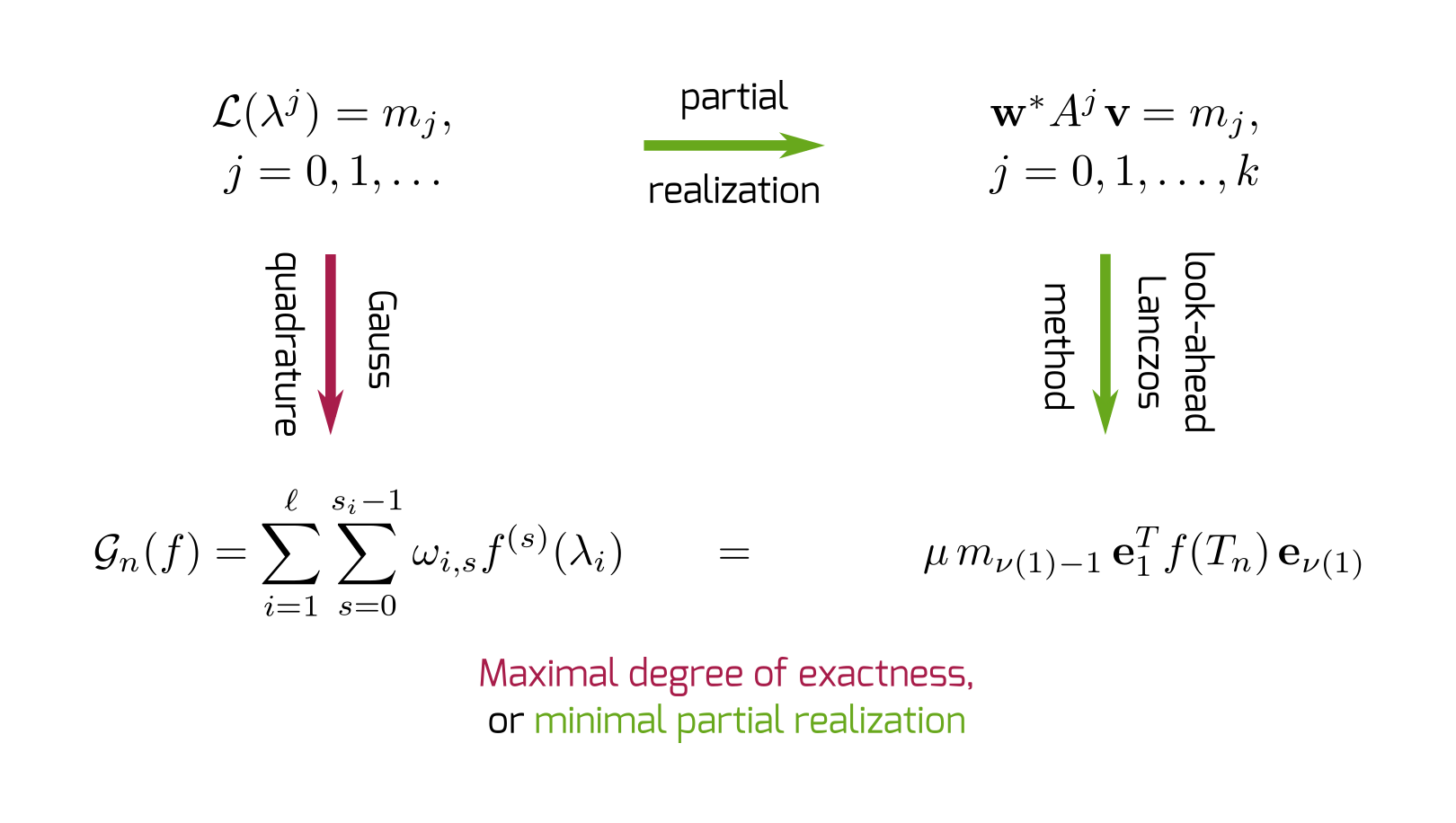}
\caption{Visualization of the connections between the Gauss quadrature for linear functionals, minimal partial realization, and look-ahead Lanczos algorithm.}
\label{fig:scheme}
\end{figure}

Figure \ref{fig:scheme} summarizes the connections
between the Gauss quadrature for linear functionals, minimal partial realization, and look-ahead Lanczos algorithm.
On the right-hand side, 
the triplet $({\mathbf w}, A, {\mathbf v})$ is a partial realization matching the first $k+1$ elements of the sequence of complex numbers $m_0, m_1, \dots$ .
A minimal partial realization
can be obtained applying the look-ahead Lanczos algorithm 
to the matrix $A$ and the vectors $\mathbf{v}, \mathbf{w}$
(this is also connected with the concept of model reduction, see, e.g., \cite[Chapter 3, in particular Section 3.9]{LieStrBook13}).
Notice that
the Lanczos algorithm applied to the partial realization \eqref{eq:part:real} 
is related to the Berlekamp-Massey algorithm \cite{Ber68,Mass69} (see \cite{Kun77}, \cite{GraLin83}, and \cite{BoLeLu92}).
On the left-hand side, 
the sequence $m_0, m_1, \dots$ determines the linear functional $\mathcal{L}: \mathcal{P} \rightarrow \mathbb{C}$ 
by defining its moments. 
The functional $\mathcal{L}$ can be approximated by a Gauss quadrature. 
Among all the Gauss quadratures exact on $\mathcal{P}_k$,
there is one with the minimal number of nodes $n$ (counting the multiplicities).
Such quadrature can be written in the matrix form
$$ \mathcal{G}_n(f) = \mu \, m_{\nu{(1)}-1} \, \mathbf{e}_1^T f(T_n) \, \mathbf{e}_{\nu{(1)}}, $$
i.e., it corresponds to the minimal partial realization matching $m_0, \dots, m_k$.

In Sections \ref{sec:minreal} and \ref{sec:lanczos}, we discussed the correspondence between the incurable breakdown in the look-ahead Lanczos algorithm and the minimal realization of an infinite sequence of complex numbers (and to the unique Gauss quadrature exact for every polynomial).
This connection led us to a new proof for the Mismatch Theorem \ref{thm:taylor}.

\begin{acknowledgements}
We would like to thank Zden\v ek Strako\v s for the helpful comments and improvements suggested.
This work has been supported by Charles University Research program No. UNCE/SCI/023 
and by the Ministry for Scientific and Technological Development, Higher Education and Information Society of R. Srpska.
\end{acknowledgements}

%
%

\bibliographystyle{spmpsci}
\bibliography{mybib}



\end{document}